\pdfoutput=1
\documentclass[headings=small, paper=a4, final]{scrartcl}

\usepackage[T1]{fontenc}
\usepackage[utf8]{inputenc}
\usepackage{amsmath, amsthm, amssymb,mathrsfs}
\usepackage{hyperref,url}   

\usepackage{booktabs}        
\usepackage{rotating}\setlength{\rotFPtop}{0pt plus 1fil}     
\usepackage{tikz}\usetikzlibrary{decorations.markings}

 \setlength{\parindent}{0cm} 
 \setlength{\parskip}{2.5mm plus1mm minus1mm}
%

\newcommand{\CC}{\mathbb{C}}
\newcommand{\FF}{\mathbb{F}}

\newcommand{\QQ}{\mathbb{Q}}
\newcommand{\ZZ}{\mathbb{Z}}

\newcommand{\fp}{\mathfrak{p}}
\newcommand{\fo}{\mathfrak{o}}
\newcommand{\wt}[1]{\widetilde{#1}}

\newcommand{\Paho}{\mathscr{P}}
\newcommand{\PahoHy}{\mathscr{K}}
\newcommand{\Pamo}{\mathscr{J}}
\newcommand{\PahoSi}{\mathscr{P}}
\newcommand{\PahoKl}{\mathscr{Q}}
\newcommand{\Iwahori}{\mathscr{B}}

\newcommand{\Borel}{B}
\newcommand{\Siegel}{P}
\newcommand{\Klingen}{Q}

\newcommand{\Ione}{\mathbf{1}} 
\DeclareMathOperator{\parres}{\mathbf{r}}
\newcommand{\quadcharFq}{\lambda_0}
\newcommand{\cuspGL}{\pi}
\newcommand{\St}{\mathrm{St}}

\DeclareMathOperator{\Hom}{Hom}
\DeclareMathOperator{\GSp}{GSp}\DeclareMathOperator{\Sp}{Sp}
\DeclareMathOperator{\GL}{GL}
\DeclareMathOperator{\Rep}{Rep}
\DeclareMathOperator{\diag}{diag}
\DeclareMathOperator{\simi}{sim}
\DeclareMathOperator{\Ind}{Ind}

\newtheorem{theorem}{Theorem}[section]        
\newtheorem{lemma}[theorem]{Lemma}
\newtheorem{proposition}[theorem]{Proposition}
\newtheorem{corollary}[theorem]{Corollary}

\theoremstyle{definition}
\newtheorem{definition}[theorem]{Definition}


\title{
\begin{large}Parahoric Restriction for GSp(4)\end{large}
}
\author{Mirko R\"osner}
\date{}

\begin{document}
\maketitle
\begin{abstract}Parahoric restriction is the parahoric analogue of Jacquet's functor.
Fix an arbitrary parahoric subgroup of the group $\GSp(4,F)$ of symplectic similitudes of genus two over a local number field $F/\QQ_p$.
We determine the parahoric restriction of the non-cuspidal irreducible smooth representations in terms of explicit character values.
\end{abstract}

\section{Introduction}
For a reductive connected group $\mathbf{G}$ over a local number field $F/\QQ_p$, with group of $F$-valued points $G=\mathbf{G}(F)$, fix a compact parahoric subgroup $\Paho\subseteq G$ with pro-unipotent radical $\Paho^+$. The \emph{parahoric restriction functor} between categories of admissible representations
\begin{equation*}\parres_{\Paho}:\Rep(G)\to\Rep(\Paho/\Paho^+),\qquad(\rho,V)\mapsto (\rho|_{\Paho},V^{\Paho^+}).\end{equation*}
is the parahoric analogue of Jacquet's functor of parabolic restriction.
It assigns to an admissible representation $(\rho,V)$ of $G$ the action of the Levi quotient $\Paho/\Paho^+$ on the space of invariants under $\Paho^+$. The functor is exact and factors over semisimplification, so it is sufficient to study irreducible admissible representations.

For the group $\mathbf{G}=\GSp(4)$ of symplectic similitudes of genus two, we determine the parahoric restriction of non-cuspidal irreducible admissible representations in terms of explicit character values for the finite Levi quotient $\Paho/\Paho^+$.
This has applications in the theory of Siegel modular forms of genus two, invariant under principal congruence subgroups of squarefree level.

\subsection{Main result}
For a proper parabolic subgroup of $G=\GSp(4,F)$ fix a cuspidal irreducible admissible complex linear representation $\sigma$ of its Levi quotient. Let $\rho$ be a subquotient of the normalized parabolic induction of $\sigma$ to $G$. Then $\rho$ is non-cuspidal and every non-cuspidal irreducible admissible representation of $G$ arises this way.

The conjugacy classes of parahoric subgroups in $G$ are represented by the standard parahoric subgroups $\Iwahori,\PahoSi,\PahoKl,\PahoHy,\Pamo$ described below. We determine the parahoric restriction of $\rho$ with respect to each of these standard parahoric subgroups. If $\sigma$ has positive depth, then the parahoric restriction of $\rho$ is zero, because the depths of $\sigma$ and $\rho$ coincide \cite[5.2]{Moy-Prasad1996}.

\begin{theorem}\label{thm:02:F_1_GSp4}
Suppose $\sigma$ has depth zero.
i) The parahoric restriction of $\rho$ with respect to the standard hyperspecial parahoric subgroup $\PahoHy$ is given by Table~\ref{tab:02:tab_F1_on_GSp4}.\footnote{Partial results in this case have been obtained before by Breeding \cite{Breeding}.}

ii) The parahoric restriction of $\rho$ with respect to the standard Iwahori $\Iwahori$, standard Siegel parahoric $\PahoSi$ and standard Klingen parahoric $\PahoKl$ is given by Table \ref{tab:02:non-special_par_res_GSp4}.

iii) The parahoric restriction of $\rho$ with respect to the standard paramodular group $\Pamo$ is given by Table \ref{tab:02:tab_r_x_paramod}.
\end{theorem}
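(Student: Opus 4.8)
The plan is to reduce the computation to two inputs that can be treated separately: the parahoric restriction of the cuspidal datum $\sigma$ along suitable parahorics of the proper Levi subgroups of $G$, and Harish--Chandra induction inside the finite reductive quotients $\Paho/\Paho^+$. Since $\parres_\Paho$ is exact and additive on semisimplifications and the positive-depth case is already disposed of, it suffices to take $\sigma$ of depth zero, to compute $\parres_\Paho(\Ind_{\mathsf{P}}^G\sigma)$ for each of the three proper standard parabolics (Borel $\Borel$, Siegel $\Siegel$, Klingen $\Klingen$) of $\GSp(4,F)$, and then to split the answer according to the composition series of $\Ind_{\mathsf{P}}^G\sigma$.

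\textbf{A Mackey formula for $\parres_\Paho\circ\Ind_{\mathsf{P}}^G$.} The structurally central step is a geometric lemma: writing $\mathsf{P}=\mathsf{M}\mathsf{N}$, the parahoric restriction $\parres_\Paho(\Ind_{\mathsf{P}}^G\sigma)$ decomposes along the finitely many double cosets in $\mathsf{P}\backslash G/\Paho$ — parametrized by Bruhat--Tits theory via a double-coset set in the Iwahori--Weyl group of $\GSp(4)$ — into a direct sum whose summands are Harish--Chandra inductions from explicit parabolics of $\mathsf{G}_\Paho=\Paho/\Paho^+$ applied to the parahoric restriction of a $\mathsf{P}$-conjugate of $\sigma$ along the corresponding parahoric of $\mathsf{M}$. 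Because $\sigma$ has depth zero, each such restriction of $\sigma$ is nonzero and is the inflation of a single irreducible representation of a finite reductive quotient of $\mathsf{M}$ — a character of $\FF_q^\times$, a depth-zero (possibly cuspidal) representation of $\GL(2,\FF_q)$, or a product thereof — read off from the refined minimal $K$-type of $\sigma$. For the hyperspecial parahoric this recovers the familiar fact that $\parres_{\PahoHy}$ of an unramified principal series is the corresponding principal series of $\GSp(4,\FF_q)$, together with its ramified variants.

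\textbf{Finite-group bookkeeping and matching of constituents.} With the geometric lemma, $\parres_\Paho(\Ind_{\mathsf{P}}^G\sigma)$ is an explicit sum of Harish--Chandra inductions in $\mathsf{G}_\Paho\in\{(\FF_q^\times)^3,\ \GL(2,\FF_q)\times\FF_q^\times,\ \GSp(4,\FF_q),\ \mathsf{M}_\Pamo\}$, which I would expand into sums of Deligne--Lusztig characters using the known decompositions of the generic and degenerate principal series of these finite groups. On the $p$-adic side, $\Ind_{\mathsf{P}}^G\sigma$ has a short, completely known composition series — the Sally--Tadi\'{c} classification of the non-supercuspidal spectrum of $\GSp(4,F)$, in the Roberts--Schmidt normalization — and every non-cuspidal $\rho$ occurs in exactly one such series. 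Exactness of $\parres_\Paho$ forces the characters just computed to be the sum over constituents $\rho$ of the unknown characters of $\parres_\Paho(\rho)$; to isolate each one I would combine: compatibility of $\parres_\Paho$ with twists $\rho\otimes(\chi\circ\simi)$ by one-dimensional characters of $G$; compatibility with the Aubert--Zelevinsky involution, which permutes the constituents and corresponds to the finite-group duality on $\mathsf{G}_\Paho$; the inclusions $\Iwahori\subset\PahoSi,\PahoKl$ and $\PahoSi,\PahoKl\subset\PahoHy,\Pamo$, which force the tables to be mutually compatible under restriction along finite parabolics; and, for the non-generic constituents (Langlands quotients, and Saito--Kurokawa and Howe--Piatetski-Shapiro type representations), direct computation of $\Paho^+$-fixed vectors from their realizations as quotients of induced representations or through Weil-representation models. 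Running these constraints over the finite list of data $\sigma$ produces all entries of Tables~\ref{tab:02:tab_F1_on_GSp4}, \ref{tab:02:non-special_par_res_GSp4} and \ref{tab:02:tab_r_x_paramod}; the paramodular table is treated last, since both the double-coset geometry and the reductive quotient $\mathsf{M}_\Pamo$ (of semisimple type $A_1\times A_1$, glued along the similitude) are most involved there.

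\textbf{Main obstacle.} The hardest point is the final attribution step: at the reducibility hyperplanes where a principal series has several constituents sharing central and infinitesimal character, the finite-group multiplicities alone do not separate them, and one must inject extra information — Aubert duality, explicit models, or temperedness and unitarity — to assign the Deligne--Lusztig pieces to the correct $\rho$. This is most delicate for the paramodular parahoric.
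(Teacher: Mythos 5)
Your first two steps---computing $\parres_{\Paho}$ of each parabolically induced representation via the double cosets of $\Paho$ against the standard parabolics, then matching constituents using exactness, character twists and transitivity between parahorics---are essentially the paper's own route (Propositions~\ref{Prop:paramod_res_Klingen_induced}, \ref{Prop:paramod_res_Siegel_induced}, \ref{Prop:F_1_commutes_par_ind}, the decompositions \eqref{eq:double_cosets_GSp4}, the twist formulas \eqref{eq:paraho_res_twist}--\eqref{eq:paraho_res_twist_Pamo} and transitivity \eqref{eq:trans_parahori_res}). The genuine gap sits exactly at the point you flag as the main obstacle, and the tie-breakers you offer provably do not close it. For type VIII the two constituents $\tau(S,\cuspGL)$ and $\tau(T,\cuspGL)$ of $1\rtimes\cuspGL$ are both tempered, share the central character, have identical restrictions at $\Iwahori$ and $\PahoSi$ (zero) and at $\PahoKl$ (namely $1\boxtimes\wt{\cuspGL}$), and are insensitive to twisting; the two candidate hyperspecial restrictions are $\chi_7(\Lambda)$ and $\chi_8(\Lambda)$, and since any duality statement would have the Aubert involution act on the pair $\{\tau(S,\cuspGL),\tau(T,\cuspGL)\}$ in the same way that Alvis--Curtis duality acts on $\{\chi_7(\Lambda),\chi_8(\Lambda)\}$, \emph{both} possible assignments are equivariant, so the duality constraint distinguishes nothing (quite apart from the fact that the compatibility of $\parres_\Paho$ with Aubert--Zelevinsky/Alvis--Curtis duality is itself an unproved assertion in your write-up). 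Likewise for type VI the formal constraints from the four induced representations leave exactly the two solutions $(\theta_5+\theta_1,\theta_3)$ and $(\theta_5,\theta_1+\theta_3)$ for $(\parres_\PahoHy(\tau(S,\nu^{-1/2})),\parres_\PahoHy(\tau(T,\nu^{-1/2})))$, and for XIa/XIb the pair $\chi_5,\chi_6$ is not separated by temperedness or unitarity. At the decisive step your proposal therefore reduces to ``direct computation via explicit models'', which is unspecified precisely where all the work lies.

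For comparison, the extra input the paper injects there is of a different nature: character identities for the endoscopic (theta) lift. The virtual representation $\tau(S,\cdot)-\tau(T,\cdot)$ is identified as the lift of $(\St_{\GL(2,F)},\St_{\GL(2,F)})$ resp.\ $(\cuspGL,\cuspGL)$, its trace is evaluated on explicit stable conjugacy classes using \cite{Weissauer200903} and the author's thesis, and the result is compared with Shinoda's character table; this is done both at $\PahoHy$ and at $\Pamo$. In addition, XIa at $\PahoHy$ is settled by the Roberts--Schmidt paramodular level bound (excluding $\chi_5(\omega_\Lambda,1)$ via invariants under a deeper congruence subgroup), Vc at $\Pamo$ uses a genericity theorem from the thesis together with the Atkin--Lehner symmetry of Lemma~\ref{Lem:paramod_res_symmetry}, and type IX is again handled in the thesis. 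If you want to remain within your toolkit you would have to make the ``model'' step concrete: for instance, prove that depth-zero parahoric restriction of a generic representation has a generic constituent (this would decide VIIIa versus VIIIb, since only $\chi_8(\Lambda)$ is generic), or import independently computed Iwahori-fixed dimensions to decide type VI---though the latter cannot help for type VIII, where all Iwahori invariants vanish. Without some such concrete input, the tables cannot be established by the bookkeeping you describe.
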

The proof is in Section \ref{sec:proof_hyperspecial}, the tables are in Section \ref{sec:Tables}.
By character theory, we can determine invariants for every subgroup of the Levi quotients. For the full Levi we obtain parahori-spherical vectors.\footnote{This coincides with previous work by Roberts and Schmidt \cite[Table A.15]{Roberts-Schmidt}.}

\begin{corollary}\label{Cor:spherical_vectors}
The parahori-spherical irreducible admissible representations of $G$ are exactly the subquotients of the unramified principal series $\mu_1\times\mu_2\rtimes\mu_0$ for unramified characters $\mu_1,\mu_2,\mu_0$ of $F^\times$. The dimension of parahori-spherical vectors is given by Table~\ref{tab:02:paho-spherical_non-cuspidal}.
\end{corollary}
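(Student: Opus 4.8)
The plan is to reduce the spectral part of the statement to the Iwahori subgroup and then extract everything from Theorem~\ref{thm:02:F_1_GSp4}.

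First I would observe that an irreducible admissible $(\rho,V)$ is parahori-spherical if and only if $V^{\Iwahori}\neq 0$. Every compact parahoric subgroup of $G$ contains, after conjugation, the standard Iwahori $\Iwahori$: the closure of the facet it stabilizes contains a chamber, whose stabilizer is an Iwahori subgroup, and the Iwahori subgroups of $G$ form a single conjugacy class because $G=\GSp(4,F)$ acts transitively on the chambers of its Bruhat--Tits building. Hence a nonzero vector fixed by any parahoric yields, after conjugation, a nonzero vector fixed by $\Iwahori$, and the converse is trivial. Writing $V^{\Iwahori}=(V^{\Iwahori^+})^{\Iwahori/\Iwahori^+}$, we see that $\rho$ is parahori-spherical precisely when the trivial character $\Ione$ of the finite torus $\Iwahori/\Iwahori^+$ is a constituent of $\parres_{\Iwahori}(\rho)$; and for every standard parahoric $\Paho$,
\begin{equation*}
\dim V^{\Paho}=\dim\Hom_{\Paho/\Paho^+}\bigl(\Ione,\parres_{\Paho}(\rho)\bigr)=\frac{1}{|\Paho/\Paho^+|}\sum_{\bar g\in\Paho/\Paho^+}\operatorname{tr}\parres_{\Paho}(\rho)(\bar g).
\end{equation*}
So both halves of the corollary are to be read off the character data of Theorem~\ref{thm:02:F_1_GSp4}.

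Next I would dispose of cuspidal $\rho$. If $\rho$ has positive depth then $\parres_{\Iwahori}(\rho)=0$, since $\rho$ has no nonzero vectors fixed by a pro-unipotent radical of a parahoric. If $\rho$ is cuspidal of depth zero then, after conjugation, its parahoric restriction along some maximal parahoric $\Paho'$ is a \emph{cuspidal} representation $\tau$ of the finite reductive quotient $\Paho'/(\Paho')^+$; choosing an Iwahori inside $\Paho'$ one has $(\Paho')^+\subseteq\Iwahori^+$ with $\Iwahori^+/(\Paho')^+$ a maximal unipotent subgroup, whence $V^{\Iwahori^+}\hookrightarrow\tau^{\Iwahori^+/(\Paho')^+}=0$ because $\tau$ is cuspidal. (Alternatively, one may invoke Borel's theorem that Iwahori-spherical irreducibles are non-cuspidal.) Thus a parahori-spherical irreducible representation is a subquotient $\rho$ of $\Ind_P^G\sigma$ for a proper parabolic $P$ with Levi quotient $M$ and a cuspidal $\sigma$ of $M$.

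For such $\rho$, the entries of $\parres_{\Iwahori}(\rho)$ furnished by Theorem~\ref{thm:02:F_1_GSp4}(ii) are finite-group parabolic inductions from the parahoric restriction of $\sigma$ along $M\cap\Iwahori$, so by Frobenius reciprocity in the finite group $\Ione$ occurs in $\parres_{\Iwahori}(\rho)$ only if the parahoric restriction of $\sigma$ contains the trivial character of the finite torus $(M\cap\Iwahori)/(M\cap\Iwahori^+)$, i.e.\ only if $\sigma$ is trivial on $M\cap\Iwahori$. If $M$ has a $\GL(2)$-factor on which $\sigma$ is cuspidal this is impossible, a cuspidal representation of $\GL(2,F)$ having no vector fixed by the Iwahori subgroup of $\GL(2,F)$; hence $P$ is the Borel subgroup and $\sigma=\mu_1\times\mu_2\rtimes\mu_0$ with $\mu_1,\mu_2,\mu_0$ unramified, i.e.\ $\rho$ is a subquotient of an unramified principal series. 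Conversely every irreducible subquotient of an unramified principal series lies in the Iwahori block and therefore has a nonzero $\Iwahori$-fixed vector (Borel; compare the table of Roberts and Schmidt \cite[Table A.15]{Roberts-Schmidt}), which proves the first assertion. For the dimension statement one substitutes the explicit character values of Theorem~\ref{thm:02:F_1_GSp4} into the displayed inner product for each of the five standard parahorics $\Paho$ and each irreducible constituent of $\mu_1\times\mu_2\rtimes\mu_0$, running over the finitely many constituent types and their reducibility loci; this yields Table~\ref{tab:02:paho-spherical_non-cuspidal}. The computations are routine inner products of finite-group characters, and I expect the main difficulty to be organisational: enumerating all constituents and reducibility points correctly, and making the criterion ``$\Ione$ occurs if and only if the inducing data are unramified'' genuinely uniform rather than an entry-by-entry inspection of the tables. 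Useful consistency checks are that the $\Iwahori$-dimensions of the constituents of one unramified principal series sum to $\dim(\mu_1\times\mu_2\rtimes\mu_0)^{\Iwahori}$, and that for a fixed $\rho$ the five parahoric dimensions respect the inclusions among the standard parahorics.
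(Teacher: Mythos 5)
Your argument is correct and is essentially the paper's own proof in expanded form: both identify $\dim V^{\Paho}$ with the multiplicity of the trivial character in $\parres_{\Paho}(\rho)$, exclude cuspidal representations via Borel and Moy--Prasad (your hand-made depth-zero argument is just a fleshed-out version of that citation), and read the dimensions off the tables of Theorem~\ref{thm:02:F_1_GSp4}. One phrasing slip only: a facet lies in the closure of a chamber, not conversely; the conclusion you actually use, that every parahoric contains a conjugate of $\Iwahori$, is correct.
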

\begin{proof}

For non-cuspidal irreducible admissible representations, the dimension of parahori-spherical vectors equals the multiplicity of the trivial representation in the parahoric restriction. Cuspidal representations are never parahori-spherical \cite[4.7]{Borel_Iwahori_Invariants}, \cite[6.11]{Moy-Prasad1996}.
\end{proof}
\begin{table}
\caption{Dimension of parahori-spherical vectors for unramified characters $\mu_1$, $\mu_2$, $\mu_0$, $\xi$. \label{tab:02:paho-spherical_non-cuspidal}}
\begin{small}
\centering
\begin{tabular}{lcccccc}
\toprule
type &$\rho$ of $\GSp(4,F)$             & $\dim\rho^{\PahoHy}$ & $\dim\rho^{\Pamo}$ & $\dim\rho^{\PahoSi}$ & $\dim\rho^{\PahoKl}$ & $\dim\rho^{\Iwahori}$ \\\midrule
   I &$\mu_1\times\mu_2\rtimes\mu_0$          & $1$                  &$2$                 & $4$                  & $4$                  & $8$  \\ 
  IIa&$\mu_1\St\rtimes\mu_0$                  & $0$                  &$1$                 & $1$                  & $2$                  & $4$  \\
  IIb&$\mu_1\Ione\rtimes\mu_0$                & $1$                  &$1$                 & $3$                  & $2$                  & $4$  \\
 IIIa&$\mu_1\rtimes\mu_0\St$                  & $0$                  &$0$                 & $2$                  & $1$                  & $4$  \\
 IIIb&$\mu_1\rtimes\mu_0 \Ione$               & $1$                  &$2$                 & $2$                  & $3$                  & $4$  \\
  IVa&$\mu_0 \St_{\GSp(4,F)}$                 & $0$                  &$0$                 & $0$                  & $0$                  & $1$  \\
  IVb&$L(\nu^2,\nu^{-1}\mu_0 \St)$            & $0$                  &$0$                 & $2$                  & $1$                  & $3$  \\
  IVc&$L(\nu^{3/2} \St,\nu^{-3/2}\mu_0)$      & $0$                  &$1$                 & $1$                  & $2$                  & $3$  \\
  IVd&$\mu_0\Ione_{\GSp(4,F)}$                & $1$                  &$1$                 & $1$                  & $1$                  & $1$  \\
   Va&$\delta(\left[\xi_u,\nu\xi_u\right],\nu^{-1/2}\mu_0)$&$0$      &$0$                 & $0$                  & $1$                  & $2$  \\
   Vb&$L(\nu^{1/2}\xi_u \St,\nu^{-1/2}\mu_0)$ &$0$                   &$1$                 & $1$                  & $1$                  & $2$  \\
   Vc&$L(\nu^{1/2}\xi_u \St,\nu^{-1/2}\xi_u\mu_0)$& $0$              &$1$                 & $1$                  & $1$                  & $2$  \\
   Vd&$L(\nu\xi_u,\xi_u\rtimes\nu^{-1/2}\mu_0)$ &$1$                 &$0$                 & $2$                  & $1$                  & $2$  \\
  VIa&$\tau(S,\nu^{-1/2}\mu_0)$                &$0$                  &$0$                 & $1$                  & $1$                  & $3$  \\ 
  VIb&$\tau(T,\nu^{-1/2}\mu_0)$                &$0$                  &$0$                 & $1$                  & $0$                  & $1$  \\ 
  VIc&$L(\nu^{1/2} \St,\nu^{-1/2}\mu_0)$       &$0$                  &$1$                 & $0$                  & $1$                  & $1$  \\ 
  VId&$L(\nu,1_{F^{\times}}\rtimes\nu^{-1/2}\mu_0)$&$1$              &$1$                 & $2$                  & $2$                  & $3$  \\
\bottomrule
\end{tabular}
\end{small}
\end{table}

\section{Preliminaries}
Fix a nonarchimedean local number field $F$ with finite residue field $\fo/\fp\cong\FF_q$ of order $q$. The valuation character $\nu=\vert\cdot\vert$ of $F^\times$ is normalized such that $\vert\varpi\vert=q^{-1}$ for a uniformizing element $\varpi\in\fp$. 

\subsection{The parahoric restriction functor}\label{s:On_par_res_functor}
Let $G$ be the group of $F$-rational points of a connected reductive linear algebraic group over a non-archimedean local number field $F$.  Let $\Paho\subseteq G$ be a parahoric subgroup with Levi decomposition
\begin{equation}
 1\to \Paho^+\to \Paho\to\underline{\Paho}\to 1.
\end{equation} where $\underline{\Paho}$ is a finite reductive group over the residue field.
For an admissible complex representation $\pi:G\to \mathrm{Aut}(V)$ on a complex vector space $V$, the action of $\Paho$ preserves the subspace $V^{\Paho^+}$ of $\Paho^+$-invariants in $V$. This defines a representation $(\rho|_{\Paho},V^{\Paho^+})$ of $\Paho/\Paho^+\cong\underline{\Paho}$. 
An intertwining operator $V_1\to V_2$ between admissible representations $(\rho_1,V_1)$ and $(\rho_2,V_2)$ of $G$ defines a canonical $\underline{\Paho}$-intertwiner $V_1^{\Paho^+}\to V_2^{\Paho^+}$.
\begin{definition} The \emph{parahoric restriction functor} for $\Paho$ is the exact functor between categories of admissible complex linear representations
\begin{equation}
 \parres_{\Paho}:\Rep(G)\to\Rep(\underline{\Paho}),\quad\begin{cases}(\rho,V)\mapsto (\rho|_{\Paho},V^{\Paho^+})\,,\\(V_1\to V_2)\mapsto (V_1^{\Paho^+}\to V_2^{\Paho^+})\,.\end{cases}
\end{equation}
\end{definition}

For parahoric subgroups $\Paho_2\subseteq \Paho_1\subseteq G$, parahoric restriction is \emph{transitive} 
\begin{equation}\label{eq:trans_parahori_res}
 \parres_{\Paho_2}(\rho,V)  \qquad  \cong  \qquad  \parres_{\Paho_2/\Paho_1^+}\circ\; \parres_{\Paho_1} (\rho,V),
\end{equation}
where $\parres_{\Paho_2/\Paho_1^+}:\Rep(\underline{\Paho_1})\to\Rep(\underline{\Paho_2})$
is the parabolic restriction functor with respect to the parabolic subgroup $\Paho_2/\Paho_1^+\subseteq \Paho_1/\Paho_1^+\cong\underline{\Paho_1}$ and Levi quotient $\underline{\Paho_2}$. Compare Vign\'{e}ras \cite[4.1.3]{Vig_Schur}.

The \emph{depth} of an irreducible admissible representation $\rho$ of $G$ is defined in the sense of Moy and Prasad \cite{Moy-Prasad1996}. By definition, an irreducible smooth representation of $G$ has depth zero if and only if it admits non-zero parahoric restriction with respect to some parahoric subgroup.


Let $P$ be a parabolic subgroup of $G$ with Levi subgroup $M$ and let $S\subseteq M$ be a maximal $F$-split torus of $G$. Fix an admissible irreducible representation $\sigma$ of $M$ and an irreducible subquotient $\rho$ of its parabolic induction to $G$. Let $\Paho$ be a parahoric subgroup attached to a point in the apartment of $S$.

\begin{proposition}\label{Prop:F_1_nonzero_for_subquots_of_par_ind}
If $\sigma$ has non-zero parahoric restriction with respect to the parahoric subgroup $M\cap\Paho$ of $M$, then $\rho$ has non-zero parahoric restriction with respect to $\Paho$.
\end{proposition}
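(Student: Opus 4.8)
The plan is to exhibit a nonzero $(M\cap\Paho)$-invariant vector inside the parabolically induced representation and show that it survives into the subquotient $\rho$, giving a nonzero $\Paho$-fixed vector. Write $I=\Ind_P^G\sigma$ for the normalized parabolic induction, realized on functions $f:G\to V_\sigma$ with $f(pg)=\delta_P(p)^{1/2}\sigma(p)f(g)$. Since $\Paho$ is attached to a point in the apartment of $S$, the Iwasawa-type decomposition $G=P\Paho$ holds, and moreover $\Paho\cap P$ surjects onto the parahoric $M\cap\Paho$ of $M$ with the unipotent radical $N$ of $P$ contributing to $\Paho^+$. Concretely, $P\cap\Paho = (M\cap\Paho)\ltimes (N\cap\Paho)$ and $N\cap\Paho^+ = N\cap\Paho$, because the apartment point lies in the closure of the relevant facet.

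The key step is a restriction-of-functions argument. By the Iwasawa decomposition, restriction $f\mapsto f|_{\Paho}$ identifies the subspace $I^{\Paho^+}$ with the space of functions $\phi:\Paho\to V_\sigma$ that are left-$(P\cap\Paho)$-equivariant by the appropriate twist of $\sigma$ (the modulus $\delta_P$ is trivial on $P\cap\Paho$ since $\Paho$ is compact) and right-$\Paho^+$-invariant; since $N\cap\Paho\subseteq\Paho^+$, such $\phi$ factors through the quotient and is determined by a function on $\underline{M\cap\Paho}$-cosets valued in $V_\sigma^{(M\cap\Paho)^+}$. This gives a $\underline{\Paho}$-equivariant isomorphism $\parres_\Paho(I)\cong\Ind_{\underline{P\cap\Paho}}^{\underline{\Paho}}\parres_{M\cap\Paho}(\sigma)$, i.e. parabolic induction on the finite reductive group $\underline{\Paho}$. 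In particular, if $\parres_{M\cap\Paho}(\sigma)\neq 0$ then $\parres_\Paho(I)\neq 0$. (This compatibility is essentially the geometric content behind the transitivity formula \eqref{eq:trans_parahori_res}.)

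It remains to pass from $I$ to the irreducible subquotient $\rho$. Here I invoke exactness of $\parres_\Paho$: from a composition series of $I$ we get $\parres_\Paho(I)=\sum_j \parres_\Paho(\rho_j)$ in the Grothendieck group of $\Rep(\underline{\Paho})$, where the $\rho_j$ are the irreducible subquotients. So $\parres_\Paho(I)\neq 0$ alone does not single out $\rho$. To force nonvanishing on $\rho$ specifically, I would argue with depth: by Moy--Prasad \cite[5.2]{Moy-Prasad1996} all subquotients $\rho_j$ of $I$ have the same depth as $\sigma$, which is zero by hypothesis (since $\parres_{M\cap\Paho}(\sigma)\neq 0$ means $\sigma$ has depth zero). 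A depth-zero irreducible representation has nonzero parahoric restriction with respect to \emph{some} parahoric; but to conclude it is nonzero for \emph{this} $\Paho$, I would use that $\Paho$ is attached to a point in the apartment of $S\subseteq M$ together with second-adjointness / the fact that the Jacquet module $r_P(\rho)$ is a nonzero subquotient of $\sigma$ (Frobenius reciprocity and cuspidality of $\sigma$ make $\sigma$ appear), so parahoric restriction along the facet in the $M$-apartment detects $\rho$.

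The main obstacle is precisely this last transfer from "$\parres_\Paho$ of the whole induced module is nonzero" to "$\parres_\Paho$ of the chosen subquotient $\rho$ is nonzero." The clean way is to combine the finite-group parabolic induction isomorphism of step two with transitivity \eqref{eq:trans_parahori_res} and the compatibility of Jacquet restriction with parahoric restriction: $\parres_{\underline{M\cap\Paho}}\circ\parres_\Paho \cong \parres_{M\cap\Paho}\circ r_P$ as functors $\Rep(G)\to\Rep(\underline{M\cap\Paho})$. Applying this to $\rho$, the right-hand side is $\parres_{M\cap\Paho}$ of a nonzero subquotient of $\sigma$; since $\sigma$ is cuspidal irreducible its parahoric restriction is (in each degree) either zero or the whole thing up to semisimplification, and depth zero guarantees it is nonzero. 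Hence $\parres_{\underline{M\cap\Paho}}(\parres_\Paho(\rho))\neq 0$, which a fortiori gives $\parres_\Paho(\rho)\neq 0$. I expect the write-up to spend most of its effort justifying the Levi decomposition $P\cap\Paho=(M\cap\Paho)\ltimes(N\cap\Paho)$ with $N\cap\Paho\subseteq\Paho^+$ and the resulting finite-group induction formula, as everything else is formal.
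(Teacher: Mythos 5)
Your final argument is close in spirit to what is actually needed, but as written it contains two incorrect claims and leaves the genuinely nontrivial input unjustified. First, the decomposition $G=P\Paho$ does \emph{not} hold for an arbitrary parahoric attached to a point of the apartment of $S$; it holds for special maximal parahorics only. The paper's own decompositions \eqref{eq:double_cosets_GSp4} already show this: $G=\Klingen\Pamo\sqcup\Klingen s_1\Pamo$ has two double cosets, and for the Iwahori $\Iwahori$ one gets one coset per affine Weyl group element. Consequently restriction of functions identifies $\parres_{\Paho}(\Ind_P^G\sigma)$ with a \emph{sum} over $P\backslash G/\Paho$ of induced pieces (exactly as in Propositions \ref{Prop:paramod_res_Klingen_induced} and \ref{Prop:paramod_res_Siegel_induced}), not with a single finite parabolic induction $\Ind_{\underline{P\cap\Paho}}^{\underline{\Paho}}\parres_{M\cap\Paho}(\sigma)$. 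You only use this to get nonvanishing on the full induced module, which, as you yourself note, does not suffice; but the same false identity is invoked again in your concluding paragraph.

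Second, and more seriously, the step that actually proves the proposition --- descending from the induced module to the chosen subquotient $\rho$ --- is carried entirely by the asserted compatibility $\parres_{\underline{M\cap\Paho}}\circ\parres_{\Paho}\cong\parres_{M\cap\Paho}\circ\parres_P$, which you neither prove nor cite; this is precisely the non-formal input. The paper needs only the weaker statement and gets it from Moy--Prasad: $\Paho^+$ admits an Iwahori factorization with respect to $(P,M)$ \cite[4.2]{Moy-Prasad1996}, whence the canonical map $\rho^{\Paho^+}\twoheadrightarrow(\parres_P(\rho))^{M\cap\Paho^+}$ is surjective \cite[2.2]{Moy-Prasad1996}. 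Your argument that the target is nonzero is also flawed: ``depth zero guarantees it is nonzero'' is false at a \emph{fixed} parahoric (depth zero only gives nonvanishing at \emph{some} parahoric; cf.\ Table~\ref{tab:F_1_for_GL2}, where depth-zero cuspidals of $\GL(2,F)$ have zero Iwahori restriction), and in general only an $N(S)$-conjugate of $\sigma$ need occur in $\parres_P(\rho)$, whose restriction at the given $M\cap\Paho$ is not covered by the hypothesis. The paper sidesteps both problems at the start: replace $P$ by an associate parabolic $P'$ with the \emph{same} Levi $M$ so that $\rho\hookrightarrow\Ind_{P'}^G(\sigma)$ \cite[2.5]{Moy-Prasad1996}; Frobenius reciprocity then gives an epimorphism $\parres_{P'}(\rho)\twoheadrightarrow\sigma$, exactness of $M\cap\Paho^+$-invariants makes $(\parres_{P'}(\rho))^{M\cap\Paho^+}$ nonzero by hypothesis, and the Moy--Prasad surjection yields $\rho^{\Paho^+}\neq 0$. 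Your proposal would be repaired by replacing the asserted functorial isomorphism with exactly this chain.
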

\begin{proof}
Replace $P$ by an associate parabolic with the same Levi subgroup, so that there is a monomorphism $\rho\hookrightarrow \Ind_{P}^G(\sigma)$ in $\Rep(G)$ \cite[2.5]{Moy-Prasad1996}.
By Frobenius reciprocity, there is an epimorphism $\parres_{{P}}(\rho)\twoheadrightarrow \sigma$ in $\Rep(M)$, where $\parres_{P}$ denotes Jacquet's functor from $\Rep(G)$ to $\Rep(M)$.
By exactness of the functor of $\Paho^+\cap M$-invariants there is an epimorphism $$(\parres_{P}(\rho))^{M\cap\Paho^+}\twoheadrightarrow \sigma^{M\cap\Paho^+}\,.$$ 
Since $M\cap\Paho^+$ is the pro-unipotent radical of $M\cap\Paho$, the right hand side is non-zero by assumption and therefore $(\parres_{P}(\rho))^{M\cap\Paho^+}$ is also non-zero.
Since $\Paho^+\subseteq G$ admits Iwahori decomposition with respect to $P$ and $M$ \cite[4.2]{Moy-Prasad1996}, there is a surjection of vector spaces $\rho^{\Paho^+}\twoheadrightarrow (\parres_{P'}(\rho))^{M\cap \Paho^+}$ \cite[2.2]{Moy-Prasad1996}. Especially, $\rho^{\Paho^+}$ is non-zero.
\end{proof}

\subsection{Parahoric restriction for \texorpdfstring{$\GL(1)$}{GL(1)} and \texorpdfstring{$\GL(2)$}{GL(2)}}\label{ss:examples_par_res_GL}
For $G=\GL(1,F)$, the irreducible admissible representations are the smooth characters $\mu:F^\times\to\CC^\times$. The unique parahoric subgroup is $\fo^\times$ with pro-unipotent radical $1+\fp$.
If $\mu$ is tamely ramified or unramified, its parahoric restriction $\parres_{\fo^\times}(\mu)$ is the character $\wt{\mu}:\fo^\times/(1+\fp)\to\CC$ such that \begin{equation}\label{eq:hy_res_GL(1)}\mu(x)=\wt{\mu}(x (1+\fp))\qquad\text{for}\qquad x\in\fo^\times\,.
\end{equation} If $\mu$ is wildly ramified, its parahoric restriction is $\wt{\mu}=0$.

For $G=\GL(2,F)$ fix the standard Borel $B$ of upper triangular matrices and the maximal torus $T$ of diagonal matrices. The conjugacy classes of parahoric subgroups are represented by the standard hyperspecial parahoric subgroup $\PahoHy=\PahoHy_G=\GL(2,\fo)$ with $\PahoHy_G^+=\PahoHy_{G}\cap(I_2+\left(\begin{smallmatrix}\fp&\fp\\\fp&\fp\end{smallmatrix}\right))$
and the standard Iwahori $\Iwahori_G=\PahoHy_G\cap\left(\begin{smallmatrix}\fo&\fo\\\fp&\fo\end{smallmatrix}\right)$ with $\Iwahori=\Iwahori_G^+=\Iwahori_G\cap(I_2+\left(\begin{smallmatrix}\fp&\fo\\\fp&\fp\end{smallmatrix}\right))$.
We identify $\PahoHy_G/\PahoHy_G^+ \cong\GL(2,\fo/\fp)\cong\GL(2,q)$ by the canonical isomorphism and identify 
$\Iwahori_G/\Iwahori_G^+\cong (\fo/\fp)^\times \times(\fo/\fp)^\times$ via $x\mapsto (x_{11}\fp,x_{22}\fp)$.

The irreducible smooth representations of $G$ are
\begin{enumerate}
\item the principal series $\mu_1\times\mu_2=\mathrm{Ind}_B^G(\mu_1\boxtimes\mu_2)$ with $\mu_1\mu_2^{-1}\neq\nu^{\pm1}$,
\item one-dimensional representations $\mu_1\Ione_G=\mu_1\circ\det$,
\item twists of the Steinberg representation $\mu_1\St_G=(\mu_1\circ\det)\otimes\St_G$,
\item cuspidal irreducible representations $\pi$
\end{enumerate} for smooth characters $\mu_1,\mu_2$ of $F^\times$.
\begin{lemma}\label{Lemma:hy_res_GL(2)}The parahoric restriction of the irreducible admissible representations $\rho$ of $G$ at $\PahoHy$ and $\Iwahori$ is given by Table~\ref{tab:F_1_for_GL2}.\footnote{This is well-known, compare Vign\'{e}ras \cite[III.3.14]{Vigneras_Representations_Modulaire} or Bushnell and Henniart \cite[\S14,\S15]{BushnellHenniart}.}
\end{lemma}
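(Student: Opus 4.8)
The plan is to treat the four families of irreducible admissible representations separately, reducing everything to the $\GL(1)$ computation of Section~\ref{ss:examples_par_res_GL} and to the character theory of the finite group $\GL(2,q)$. For a principal series $\rho=\mu_1\times\mu_2=\Ind_B^G(\mu_1\boxtimes\mu_2)$ I would use the Iwasawa decomposition $G=B\cdot\GL(2,\fo)$ together with the Iwahori decomposition of $\PahoHy^+=\GL(2,\fo)^+$ with respect to $B$ and $T$ (Moy--Prasad \cite{Moy-Prasad1996}, the very ingredient used in the proof of Proposition~\ref{Prop:F_1_nonzero_for_subquots_of_par_ind}) to obtain a canonical isomorphism
\[
\parres_{\PahoHy}\bigl(\Ind_B^G(\mu_1\boxtimes\mu_2)\bigr)\ \cong\ \Ind_{\underline B}^{\GL(2,q)}\bigl(\parres_{T\cap\PahoHy}(\mu_1\boxtimes\mu_2)\bigr)\ =\ \Ind_{\underline B}^{\GL(2,q)}(\wt\mu_1\boxtimes\wt\mu_2),
\]
where $\underline B\subseteq\GL(2,q)$ is the image of $B\cap\GL(2,\fo)$ and $\wt\mu_i=\parres_{\fo^\times}(\mu_i)$ is as in \eqref{eq:hy_res_GL(1)}; in particular this vanishes as soon as one of $\mu_1,\mu_2$ is wildly ramified, consistently with $\rho$ then having positive depth. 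Decomposing this finite principal series --- irreducible if $\wt\mu_1\neq\wt\mu_2$, and equal to $\wt\mu_1\circ\det$ plus its twist $(\wt\mu_1\circ\det)\otimes\St_{\GL(2,q)}$ if $\wt\mu_1=\wt\mu_2$ --- gives the $\PahoHy$-entries of the table. For the Iwahori, transitivity \eqref{eq:trans_parahori_res} identifies $\parres_{\Iwahori}(\rho)$ with the parabolic restriction of $\parres_{\PahoHy}(\rho)$ along $\underline B$ to the diagonal torus $\Iwahori/\Iwahori^+\cong(\fo/\fp)^\times\times(\fo/\fp)^\times$, and a Bruhat/Mackey computation in $\GL(2,q)$ yields $\wt\mu_1\boxtimes\wt\mu_2\oplus\wt\mu_2\boxtimes\wt\mu_1$.

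For the character $\mu_1\Ione_G=\mu_1\circ\det$, the equality $\det(\PahoHy^+)=1+\fp$ shows it is $\PahoHy^+$-trivial exactly when $\mu_1$ is at most tamely ramified, in which case $\parres_{\PahoHy}(\mu_1\Ione_G)=\wt\mu_1\circ\det$ and restriction to the torus gives $\wt\mu_1\boxtimes\wt\mu_1$. For the Steinberg twist $\mu_1\St_G$ I would use that $\parres$ is exact and factors over semisimplification, applied to the reducible principal series $\mu_1\nu^{1/2}\times\mu_1\nu^{-1/2}$ (whose constituents are $\mu_1\St_G$ and $\mu_1\Ione_G$): then $[\parres(\mu_1\St_G)]=[\parres(\mu_1\nu^{1/2}\times\mu_1\nu^{-1/2})]-[\parres(\mu_1\Ione_G)]$, and since $\nu$ is unramified, subtracting the two computations above leaves $(\wt\mu_1\circ\det)\otimes\St_{\GL(2,q)}$ at $\PahoHy$ and a single copy of $\wt\mu_1\boxtimes\wt\mu_1$ at $\Iwahori$.

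For a cuspidal $\pi$ of positive depth, $\parres_{\PahoHy}(\pi)=\parres_{\Iwahori}(\pi)=0$ by the characterization of depth zero recalled in Section~\ref{s:On_par_res_functor} \cite{Moy-Prasad1996}. For a depth-zero cuspidal $\pi$, write it as the compact induction from $F^\times\PahoHy$ of a representation $\wt\sigma$ inflating a cuspidal irreducible $\sigma$ of $\GL(2,q)$. A Mackey argument over $\PahoHy\backslash G/F^\times\PahoHy$ then computes $\pi^{\PahoHy^+}$: the trivial double coset contributes a copy of $\sigma$, while every other double coset has a representative $g$ for which $\PahoHy^+\cap g(F^\times\PahoHy)g^{-1}$ contains a conjugate of the unipotent radical of $\underline B$, so that its contribution to the $\PahoHy^+$-invariants is a quotient of the Jacquet module of $\sigma$ and hence vanishes. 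Thus $\parres_{\PahoHy}(\pi)=\sigma$, and by transitivity $\parres_{\Iwahori}(\pi)$ is the parabolic restriction of the cuspidal $\sigma$ to the torus, which is $0$.

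The main obstacle is making the first step precise: establishing $\parres_{\PahoHy}\circ\Ind_B^G\cong\Ind_{\underline B}^{\GL(2,q)}\circ\parres_{T\cap\PahoHy}$ requires choosing coset representatives for $B\backslash G/\PahoHy$ compatibly with the Iwahori decomposition of $\PahoHy^+$ and checking that passing to $\PahoHy^+$-invariants is exact on the resulting filtration of the induced space; the analogous care is needed in the cuspidal case, namely to see that each nontrivial double coset meets the Iwahori decomposition in such a way that cuspidality of $\sigma$ kills its contribution. Both points are standard --- compare Vign\'eras \cite[III.3.14]{Vigneras_Representations_Modulaire} and Bushnell--Henniart \cite[\S14, \S15]{BushnellHenniart} --- but they carry the real content, the remaining decompositions being routine $\GL(2,q)$ character theory.
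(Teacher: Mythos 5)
Your proposal is correct, and for most of the table it follows the same route as the paper: the principal series is handled via the Iwasawa decomposition $G=B\PahoHy$ (the paper's Prop.~\ref{Prop:F_1_commutes_par_ind}-style argument), the one-dimensional case via $\det(\PahoHy^+)=1+\fp$, the Steinberg twist by exactness applied to the reducible principal series (the paper phrases the same subtraction via Maschke's theorem), and the Iwahori column by transitivity \eqref{eq:trans_parahori_res} plus finite parabolic restriction in $\GL(2,q)$. The one genuine divergence is the depth-zero cuspidal case: the paper simply quotes that such $\pi$ is compactly induced from the normalizer $F^\times\PahoHy$ \cite[6.8]{Moy-Prasad1996} and then invokes Vign\'eras' theorem \cite[Cor.\,5.3]{Vigneras_Barcelona} to identify $\parres_{\PahoHy}(\pi)$ with the inflated cuspidal $\sigma$, whereas you re-prove this in the special case at hand by a Mackey decomposition over $\PahoHy\backslash G/F^\times\PahoHy$: with representatives $\diag(\varpi^n,1)$, $n\ge1$, the intersection $\PahoHy^+\cap g(F^\times\PahoHy)g^{-1}$ contains $g\left(\begin{smallmatrix}1&\fo\\&1\end{smallmatrix}\right)g^{-1}$, which acts through the full unipotent radical of $\GL(2,q)$ on ${}^g\sigma$, so cuspidality kills every nontrivial coset (and since $\PahoHy^+$ is normal in $\PahoHy$ the "conjugates" you worry about cause no extra trouble). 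Your route buys a self-contained argument at the cost of redoing a known result; the paper's citation is shorter and extends uniformly to the $\GSp(4)$ setting where the analogous explicit double-coset analysis would be heavier. Both are sound, so there is no gap to repair.
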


\begin{proof}
For a pair of smooth characters $\mu_1,\mu_2$ of $F^\times$, the parahoric restriction of $\mu_1\times\mu_2$ at $\PahoHy_G$ is
\begin{equation*}\parres_{\PahoHy_G}(\mu_1\times\mu_2) \cong \wt{\mu_1}\times\wt{\mu_2},\end{equation*}
by a standard argument using Iwasawa decomposition $G=B\PahoHy_G$, compare Prop.~\ref{Prop:F_1_commutes_par_ind}. By exactness of parahoric restriction and Maschke's theorem, the exact sequence
\begin{align*}
0\longrightarrow\mu_1\Ione_{\GL(2,F)}\longrightarrow\nu^{-1/2}\mu_1\times\nu^{1/2}\mu_1\longrightarrow\mu_1\St_{\GL(2,F)}\longrightarrow0.
\end{align*}
yields an isomorphism
\begin{align*}
\wt{\mu_1}\St_{\GL(2,q)}\,\oplus\;\wt{\mu_1}\Ione_{\GL(2,q)}
\cong\wt{\mu_1}\times\wt{\mu_1}
\cong\parres_{\PahoHy_G}(\mu_1\Ione_{G})\oplus\parres_{\PahoHy_G}(\mu_1 \St_{G}).
\end{align*} If $\mu_1$ is tamely ramified or unramified, we have $\parres_{\PahoHy_G}(\mu_1\circ\det)=\wt{\mu_1}\circ\det$ because $\det (\PahoHy_G^+)= 1+\fp$. Cuspidal irreducible representations of depth zero are compactly induced from the normalizer of $\PahoHy$ \cite[6.8]{Moy-Prasad1996}, the result is then implied by a theorem of Vign\'{e}ras \cite[Cor.\,5.3]{Vigneras_Barcelona}. 
For Iwahori restriction use transitivity \eqref{eq:trans_parahori_res}.
\end{proof}

\begin{table}
\centering
\caption{Parahoric restriction for smooth irreducible  representations of $\GL(2,F)$.\label{tab:F_1_for_GL2}}
\begin{tabular}{lcc}
\toprule
$\rho$ of $\GL(2,F)$& $\parres_{\PahoHy}(\rho)$ of $\GL(2,q)$ &  $\parres_{\Iwahori}(\rho)$ of $\GL(1,q)\times \GL(1,q)$   \\
\midrule 
$\mu_1\times\mu_2$  & $\wt{\mu_1}\times\wt{\mu_2}$ & $\wt{\mu_1}\boxtimes\wt{\mu_2}+\wt{\mu_2}\boxtimes\wt{\mu_1}$\\
$\mu_1 \cdot \Ione_{\GL(2,F)}$ & $\wt{\mu_1}\cdot\Ione_{\GL(2,q)}$ &  $\wt{\mu_1}\boxtimes\wt{\mu_1}$                   \\ 
$\mu_1 \cdot \St_{\GL(2,F)}$   & $\wt{\mu_1}\cdot\St_{\GL(2,q)}$   &  $\wt{\mu_1}\boxtimes\wt{\mu_1}$                   \\
$\pi$   depth zero                & cuspidal irreducible    &  $0$                                               \\ 
$\pi$   positive depth            & 0                       &  $0$                                               \\
\bottomrule
\end{tabular}
\end{table}

\subsection{The group \texorpdfstring{$\GSp(4)$}{GSp(4)}}
The group $\mathbf{G}=\GSp(4)$ of symplectic similitudes of genus two is defined over $\ZZ$ by the equation
\begin{gather*}
 J=\nu gJg^t  \qquad  \text{for} \qquad g\in \GL(4),\; \nu\in\GL(1) \qquad \text{and} \qquad J=\left(\begin{smallmatrix}&I_2\\-I_2&\end{smallmatrix}\right).
\end{gather*}
The similitude factor $\simi(g)=\nu$ is uniquely determined by $g$ and defines a character $\simi:\mathbf{G}\to\GL(1)$. We fix the split torus $\mathbf{T}$ of diagonal matrices and
the standard parabolic subgroups
\begin{equation*}
\mathbf{\Borel}=\left(\begin{smallmatrix}\ast&\ast&\ast&\ast\\&\ast&\ast&\ast\\&&\ast&\\&&\ast&\ast\end{smallmatrix}\right)\cap \mathbf{G},\qquad
\mathbf{\Siegel}=\left(\begin{smallmatrix}\ast&\ast&\ast&\ast\\\ast&\ast&\ast&\ast\\&&\ast&\ast\\&&\ast&\ast\end{smallmatrix}\right)\cap \mathbf{G},\qquad
\mathbf{\Klingen}=\left(\begin{smallmatrix}\ast&\ast&\ast&\ast\\&\ast&\ast&\ast\\&&\ast&\\&\ast&\ast&\ast\end{smallmatrix}\right)\cap \mathbf{G}.
\end{equation*} The corresponding groups of $F$-rational points are $G, T,\Borel,\Siegel,\Klingen$.

For smooth characters $\mu_i$ of $F^\times$, $i=0,1,2$, normalized parabolic induction of the character $T\to\CC,\;\diag(t_1,t_2,t_0/t_1,t_0/t_2)\mapsto\mu_1(t_1)\mu_2(t_2)\mu_0(t_0)$ via the standard Borel $B$ yields the admissible representation $\mu_1\times\mu_2\rtimes\mu_0$ of $G$. For parabolic induction via $\Siegel$ and $\Klingen$ the notation is analogous, compare Tadi\'{c} \cite{Ta91}.

A non-trivial additive character $\psi:F\to\CC^\times$ gives rise to a generic character of $U_\Borel$ via $\psi_U:U_\Borel\to\CC,\,u\mapsto \psi(u_{12}+u_{24})$. An admissible representation $\rho$ of $G$ is \emph{generic} if it admits a non-trivial $U_\Borel$-intertwining operator $(\rho|_{U_\Borel},V)\to(\psi_U,\CC)$.

We review the classification of standard parahoric subgroups of $\GSp(4,F)$.
The character group $X^\ast(\mathbf{T})=\Hom(\mathbf{T},\GL(1))$ is generated as a free group by the characters $e_i:\diag(t_1,t_2,t_0/t_1,t_0/t_2)\mapsto t_i$ for $i=0,1,2$. 
The simple affine roots $\psi_0=-(2e_1-e_0)+1,\psi_1=e_1-e_2$ and $\psi_2=2 e_2-e_0$ constitute the affine Dynkin diagram
\begin{center}
  \begin{tikzpicture}[scale=.5, decoration={markings,mark=at position 0.7 with {\arrow{>}}}]
    \draw (-2,0) node[anchor=east] {$\mathscr{C}_2:$};
    \draw[xshift=0.0 cm,thick] (0.0 cm,0) circle (.3cm);
    \draw[xshift=1.5 cm,thick] (1.5 cm,0) circle (.3cm);
    \draw[xshift=3.0 cm,thick] (3.0 cm,0) circle (.3cm);
    \draw[xshift=0.0 cm] (0.0 cm,-1) node[anchor=mid] {$\psi_0$};
    \draw[xshift=1.5 cm] (1.5 cm,-1) node[anchor=mid] {$\psi_1$};
    \draw[xshift=3.0 cm] (3.0 cm,-1) node[anchor=mid] {$\psi_2$};
    \draw[thick,double,postaction={decorate}](5.7 cm, 0.0 cm) -- +(-2.4 cm,0);
    \draw[thick,double,postaction={decorate}](0.3 cm, 0.0 cm) -- +(+2.4 cm,0);
    \draw (7.4,-.1) node[anchor=east] {$.$};
  \end{tikzpicture}
\end{center} Let $N(T)$ be the normalizer of $T$ in $G$. The affine Weyl group $N(T)/\mathbf{T}(\fo)$ is generated by the root reflections $s_i$ at $\psi_i$ for $i=0,1,2$ and the Atkin-Lehner element $u_1$
\begin{align*}
s_0=\left(\begin{smallmatrix}
     &&\varpi^{-1}&\\&1&&\\-\varpi&&&\\&&&1
    \end{smallmatrix}\right),\;\;
s_1=\left(\begin{smallmatrix}
     &1&&\\1&&&\\&&&1\\&&1&
    \end{smallmatrix}\right),\;\;
s_2=\left(\begin{smallmatrix}
     1&&&\\&&&1\\&&1&\\&-1&&
    \end{smallmatrix}\right),\;\;
u_1=\left(\begin{smallmatrix}
     &&&1\\&&1&\\&\varpi&&\\\varpi&&&
    \end{smallmatrix}\right).
\end{align*}
The simple affine roots $\psi_0$ and $\psi_2$ are conjugate under $u_1$.
The closed standard alcove $\mathcal{C}$ in the apartment attached to $T$ is defined by $\psi_i(x)\geq0$ for $i=0,1,2$. To each facet in $\mathcal{C}$ is attached one of the standard parahoric subgroups of $\GSp(4,F)$:
\begin{enumerate}
\item the standard Iwahori subgroup $\Iwahori$, attached to $\mathcal{C}$,
\begin{align*}
 \Iwahori=\simi^{-1}(\fo^\times)\cap \begin{pmatrix}\fo&\fo&\fo&\fo\\\fp&\fo&\fo&\fo\\\fp&\fp&\fo&\fp\\\fp&\fp&\fo&\fo\end{pmatrix},\quad \Iwahori^+=\Iwahori\cap\begin{pmatrix}1+\fp&\fo&\fo&\fo\\\fp&1+\fp&\fo&\fo\\\fp&\fp&1+\fp&\fp\\\fp&\fp&\fo&1+\fp\end{pmatrix}
\end{align*}
with Levi quotient $\Iwahori/\Iwahori^+\cong \GL(1,q)^3$ via $x\mapsto (x_{11},x_{22},\simi(x))$,
 \item the standard Siegel parahoric $\PahoSi$, attached to the facet $\psi_1^{-1}(0)\cap\mathcal{C}$, 
\begin{align*}
 \PahoSi=\simi^{-1}(\fo^\times)\cap \begin{pmatrix}\fo&\fo&\fo&\fo\\\fo&\fo&\fo&\fo\\\fp&\fp&\fo&\fo\\\fp&\fp&\fo&\fo\end{pmatrix},\quad \PahoSi^+=\PahoSi\cap \begin{pmatrix}1+\fp&\fp&\fo&\fo\\\fp&1+\fp&\fo&\fo\\\fp&\fp&1+\fp&\fp\\\fp&\fp&\fp&1+\fp\end{pmatrix}
\end{align*}
with $\PahoSi/\PahoSi^+\cong \GL(2,q)\times\GL(1,q)$ via $x\mapsto (\left(\begin{smallmatrix}x_{11}&x_{12}\\x_{21}&x_{22}\end{smallmatrix}\right),\simi(x))$,
 \item the standard Klingen parahoric $\PahoKl$, attached to the facet $\psi_2^{-1}(0)\cap \mathcal{C}$, 
\begin{align*}
  \PahoKl=\simi^{-1}(\fo^\times)\cap \begin{pmatrix}\fo&\fo&\fo&\fo\\\fp&\fo&\fo&\fo\\\fp&\fp&\fo&\fp\\\fp&\fo&\fo&\fo\end{pmatrix},\quad
  \PahoKl^+=\PahoKl\cap \begin{pmatrix}1+\fp&\fo&\fo&\fo\\\fp&1+\fp&\fo&\fp\\\fp&\fp&1+\fp&\fp\\\fp&\fp&\fo&1+\fp\end{pmatrix}
\end{align*}
with $\PahoKl/\PahoKl^+\cong \GL(1,q)\times\GSp(2,q)$ via $x\mapsto (x_{11},\left(\begin{smallmatrix}x_{22}&x_{24}\\x_{42}&x_{44}\end{smallmatrix}\right))$,
\item the standard hyperspecial parahoric subgroup $\PahoHy=\GSp(4,\fo)$, attached to the facet $\psi_1^{-1}(0)\cap\psi_2^{-1}(0)\cap \mathcal{C}$, 
\begin{align*}
 \PahoHy=\simi^{-1}(\fo^\times)\cap \begin{pmatrix}\fo&\fo&\fo&\fo\\\fo&\fo&\fo&\fo\\\fo&\fo&\fo&\fo\\\fo&\fo&\fo&\fo\end{pmatrix},\quad \PahoHy^+=\PahoHy\cap\begin{pmatrix}1+\fp&\fp&\fp&\fp\\\fp&1+\fp&\fp&\fp\\\fp&\fp&1+\fp&\fp\\\fp&\fp&\fp&1+\fp\end{pmatrix}
\end{align*}
with the canonical map $\PahoHy/\PahoHy^+\cong\GSp(4,\fo/\fp)\cong\GSp(4,q)$,
 \item the standard paramodular subgroup $\Pamo$ with facet $\psi_0^{-1}(0)\cap\psi_2^{-1}(0)\cap \mathcal{C}$,
\begin{gather*}\!\!\!\!\!\!
\Pamo=\simi^{-1}(\fo^\times)\cap \begin{pmatrix}\fo&\fo&\fp^{-1}&\fo\\\fp&\fo&\fo&\fo\\\fp&\fp&\fo&\fp\\\fp&\fo&\fo&\fo\end{pmatrix},\quad
\Pamo^+=\Pamo\cap \begin{pmatrix}1+\fp&\fo&\fo&\fo\\\fp&1+\fp&\fo&\fp\\\fp^2&\fp&1+\fp&\fp\\\fp&\fp&\fo&1+\fp\end{pmatrix}
\end{gather*}
with $\Pamo/\Pamo^+\cong(\GL(2,q)^2)^0:=\{(a,b)\in \GL(2,q)^2\,\vert\,\det a=\det b\}$ via 
\begin{equation*}x\mapsto \left(\begin{pmatrix}x_{11}&x_{13}\varpi\\x_{31}\varpi^{-1}&x_{33}\end{pmatrix},\begin{pmatrix}x_{22}&x_{24}\\x_{42}&x_{44}\end{pmatrix}\right)\,,\end{equation*}
\item the parahoric $u_1^{-1}\PahoKl u_1$ attached to the facet $\psi_0^{-1}(0)\cap \mathcal{C}$, 
\item the hyperspecial parahoric $u_1^{-1}\PahoHy u_1$ attached to $\psi_0^{-1}(0)\cap\psi_1^{-1}(0)\cap\mathcal{C}$.
\end{enumerate}
Conjugation by the Atkin-Lehner element $u_1$ preserves $\Iwahori$, $\PahoSi$ and $\Pamo$. The standard maximal parahorics are $\PahoHy$, $\Pamo$ and $u_1^{-1}\PahoHy u_1$.

There are double coset decompositions
\begin{gather}\label{eq:double_cosets_GSp4}
\GSp(4,F)= B \PahoHy
= \Siegel\Pamo=\Klingen \Pamo\sqcup \Klingen s_1\Pamo
\end{gather} for the standard parabolics $\Borel$, $\Siegel$, $\Klingen$, where $\sqcup$ denotes the disjoint union. The proof is elementary and follows from Iwasawa and Bruhat decomposition.

For every parahoric subgroup of $\GSp(4,F)$, the image of the pro-unipotent radical under the similitude character is $1+\fp$. Therefore twisting a representation $\rho$ by a tamely ramified or unramified character $\mu$ of $F^\times$ commutes with parahoric restriction in the following sense:
\begin{gather}\label{eq:paraho_res_twist}
\parres_{\PahoHy}((\mu\circ\simi)\otimes\rho)\cong (\wt{\mu}\circ\simi)\otimes \parres_{\PahoHy}(\rho)\,,\\
\label{eq:paraho_res_twist_Pamo}
\parres_{\Pamo}((\mu\circ\simi)\otimes\rho)\cong (\wt{\mu}\circ\det)\otimes\parres_{\Pamo}(\rho)\,.
\end{gather}
with $\det:(\GL(2,q)^2)^0\to\FF_q^\times\,,\;(a,b)\mapsto \det a$.


\section{Proof of Theorem \ref{thm:02:F_1_GSp4}}\label{sec:proof_hyperspecial}
\begin{proposition}\label{Prop:paramod_res_Klingen_induced}
For an admissible representation $(\sigma,V_\sigma)$ of $\GSp(2,F)$ and a character $\mu_1:F^\times\to\CC^\times$, the parahoric restriction at $\Pamo$ of the Klingen induced representation $\mu_1\rtimes\sigma$ is
\begin{gather}
 \parres_{\Pamo}(\mu_1\rtimes\sigma)\cong[\wt{\mu_1}\times 1,\wt{\sigma}]\oplus[\wt{\sigma},\wt{\mu_1}\times 1]
\end{gather}
for $\wt{\mu_1}=\parres_{\fo^\times}(\mu_1)$ and $\wt{\sigma}=\parres_{\GL(2,\fo)}(\sigma)$.
\end{proposition}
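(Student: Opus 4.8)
The plan is to compute $\parres_{\Pamo}(\mu_1\rtimes\sigma)$ from the double coset decomposition $G=\Klingen\Pamo\sqcup\Klingen s_1\Pamo$ of \eqref{eq:double_cosets_GSp4}; the two cosets will produce the two summands. First I would realize the normalized Klingen induction in its smooth induced model as $\Ind_{\Klingen}^{G}\bigl(\delta_{\Klingen}^{1/2}\cdot(\mu_1\boxtimes\sigma)\bigr)$, with $\delta_{\Klingen}$ the modulus character of $\Klingen$. Since $\Pamo$ is open and there are only finitely many double cosets, each $\Klingen g\Pamo$ is open and closed in $G$, so the restriction to $\Pamo$ decomposes as $V|_{\Pamo}=V_{1}\oplus V_{s_1}$, where $V_{g}$ is the space of functions supported on $\Klingen g\Pamo$. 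By Mackey theory $V_{g}\cong\Ind_{H_{g}}^{\Pamo}(\tau_{g})$ with $H_{g}=\Pamo\cap g^{-1}\Klingen g$ and $\tau_{g}$ the corresponding $g$-conjugate of $\delta_{\Klingen}^{1/2}\cdot(\mu_1\boxtimes\sigma)$ restricted to $H_{g}$; as $H_{g}\subseteq\Pamo$ is compact and $\delta_{\Klingen}$ is $\mathbb{R}_{>0}$-valued, the modulus factor is trivial on $H_{g}$, and $\tau_{g}$ is just the conjugated restriction of $\mu_1\boxtimes\sigma$.

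The second step is to pass to $\Pamo^{+}$-invariants. Since $\Pamo^{+}$ is normal in $\Pamo$ with $\Pamo/\Pamo^{+}\cong\underline{\Pamo}$, for any closed subgroup $H\subseteq\Pamo$ a direct inspection of the induced model gives a canonical isomorphism of $\underline{\Pamo}$-modules $(\Ind_{H}^{\Pamo}\tau)^{\Pamo^{+}}\cong\Ind_{\underline{H}}^{\underline{\Pamo}}\bigl(V_{\tau}^{H\cap\Pamo^{+}}\bigr)$, where $\underline{H}$ is the image of $H$ in $\underline{\Pamo}$ and $V_{\tau}^{H\cap\Pamo^{+}}$ is viewed as an $\underline{H}$-module; this is the several-double-coset analogue of Proposition~\ref{Prop:F_1_commutes_par_ind}. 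Applying it with $H=H_{g}$ requires three identifications. First, using the Iwahori decomposition of $\Pamo^{+}$ with respect to the relevant Klingen parabolic \cite[4.2]{Moy-Prasad1996}, one checks that $H_{g}\cap\Pamo^{+}$ projects onto the pro-unipotent radical of the standard hyperspecial parahoric $\fo^{\times}\times\GL(2,\fo)$ of the Levi $\GL(1)\times\GSp(2)$, so that $V_{\tau_{g}}^{H_{g}\cap\Pamo^{+}}$ equals $\wt{\mu_1}\boxtimes\wt{\sigma}$ of the finite Levi $\GL(1,q)\times\GL(2,q)$; in particular it vanishes when $\mu_1$ or $\sigma$ has positive depth, consistently with $\wt{\mu_1}=0$, resp.\ $\wt{\sigma}=0$. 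Second and third, one must identify $\underline{H_{g}}$ inside $\underline{\Pamo}=(\GL(2,q)^{2})^{0}$ and carry out the induction.

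For $g=1$, the isomorphism $\Pamo/\Pamo^{+}\cong(\GL(2,q)^{2})^{0}$, $x\mapsto\bigl(\left(\begin{smallmatrix}x_{11}&x_{13}\varpi\\x_{31}\varpi^{-1}&x_{33}\end{smallmatrix}\right),\left(\begin{smallmatrix}x_{22}&x_{24}\\x_{42}&x_{44}\end{smallmatrix}\right)\bigr)$, puts the $\GL(1)$-factor of the Klingen Levi into the coordinates $\{1,3\}$, hence into the first $\GL(2,q)$-factor, and the $\GSp(2)$-factor into the coordinates $\{2,4\}$, hence into the second; together with the image of $N_{\Klingen}\cap\Pamo$, which contributes the upper right entry of the first factor, this gives $\underline{H_{1}}=\{(a,b)\in(\GL(2,q)^{2})^{0}\mid a\ \text{upper triangular}\}$, with $V_{\tau_{1}}^{H_{1}\cap\Pamo^{+}}$ acting through $a\mapsto\wt{\mu_1}(a_{11})$ on the first factor and through $\wt{\sigma}$ on the second. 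The induction $\Ind_{\underline{H_{1}}}^{\underline{\Pamo}}$ changes only the first factor and automatically preserves the determinant condition, so it replaces $\wt{\mu_1}\boxtimes1$ on the Borel by the principal series $\wt{\mu_1}\times1$ of $\GL(2,q)$; hence $(V_{1})^{\Pamo^{+}}\cong[\wt{\mu_1}\times1,\wt{\sigma}]$.

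For the coset $\Klingen s_1\Pamo$, conjugation by $s_1=\left(\begin{smallmatrix}&1&&\\1&&&\\&&&1\\&&1&\end{smallmatrix}\right)$ interchanges the coordinate sets $\{1,3\}$ and $\{2,4\}$, so the $\GSp(2)$-factor of $s_1\Klingen s_1$ now lands in the first $\GL(2,q)$-factor of $\underline{\Pamo}$ and the $\GL(1)$-factor in the second. The only subtlety is that, because of the $\varpi$-rescaling in the isomorphism above, this $\GSp(2)$-block is realized through the conjugate hyperspecial maximal compact $\diag(1,\varpi)\GL(2,\fo)\diag(1,\varpi)^{-1}$ of $\GL(2,F)$; since conjugation by $\diag(1,\varpi)\in\GL(2,F)$ is inner, parahoric restriction at that vertex is canonically $\wt{\sigma}$ again, so $(V_{s_1})^{\Pamo^{+}}\cong[\wt{\sigma},\wt{\mu_1}\times1]$, and adding the two contributions yields the claim. (The symmetry of the answer also follows a priori: conjugation by the Atkin--Lehner element $u_1$ preserves $\Pamo$ and interchanges the two $\GL(2,q)$-factors of $\underline{\Pamo}$, while $\mu_1\rtimes\sigma$ is isomorphic to its $u_1$-conjugate.) I expect the bookkeeping in these last two steps to be the main obstacle: matching the $\varpi$-twisted coordinates on $\Pamo/\Pamo^{+}$ with the block structures of $\Klingen$ and of $s_1\Klingen s_1$, verifying the projection of $H_{g}\cap\Pamo^{+}$ onto the Levi, and confirming that the twist by $\diag(1,\varpi)$ occurring in the second coset leaves $\wt{\sigma}$ unchanged.
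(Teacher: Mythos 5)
Your proposal is correct and follows essentially the same route as the paper: split the induced model along the double coset decomposition $G=\Klingen\Pamo\sqcup\Klingen s_1\Pamo$, pass to $\Pamo^+$-invariants, and identify each summand as the induction from the image of $\Pamo\cap\Klingen$ (resp.\ its $s_1$-conjugate) in $(\GL(2,q)^2)^0$, giving $[\wt{\mu_1}\times 1,\wt{\sigma}]$ and $[\wt{\sigma},\wt{\mu_1}\times 1]$. Your explicit treatment of the second coset (the $\varpi$-rescaled block and the inner conjugation by $\diag(1,\varpi)$) is a point the paper dispatches with ``the argument is analogous,'' and it checks out.
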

\begin{proof}
An explicit model of $\parres_{\Pamo}(\mu_1\rtimes\sigma)$ is given by the right-action of $\Pamo$ on 
\begin{gather*}
\wt{V}= \{f:G\to V_\sigma \,|\, f(pgk)=\delta_{\Klingen }^{1/2}(p)(\mu_1\boxtimes\sigma)(p) f(g)\;\forall p\in \Klingen ,g\in G,k\in\Pamo^+\}.
\end{gather*}
By \eqref{eq:double_cosets_GSp4}, any $f\in\wt{V}$ is uniquely determined by its restriction to $\Pamo$ and $s_1\Pamo$, so the $\Pamo$-representation $\wt{V}$ is isomorphic to the direct sum
\begin{gather*}
\{f|_{\Pamo}\,:\Pamo\to V_\sigma\,|\, f\in\wt{V} \}\;\;\oplus\;\; \{f|_{s_1\Pamo}\,:s_1\Pamo\to V_\sigma \,|\, f\in\wt{V}\}.
\end{gather*}
Every $f|_{\Pamo}$ in the first subspace is left invariant under $\Pamo^+\cap Q$, so it maps to the $\sigma$-invariants under $\GSp(2,\fo)^+$. Thus $f|_\Pamo$ factors over a unique function
\begin{gather*}
\wt{f}: \Pamo/\Pamo^+ \to V_\sigma^{\GSp(2,\fo)^+} \quad \text{with}  \quad  \wt{f}(qg)=\wt\mu(q_{11}) \wt\sigma\begin{pmatrix}q_{22}&q_{24}\\q_{42}&q_{44}\end{pmatrix}\wt{f}(g)
\end{gather*}
for every $g\in \Pamo/\Pamo^+\cong (\GL(2,q)^2)^0$ and every
\begin{gather*}q\in(\Pamo\cap \Klingen )\Pamo^+/\Pamo^+ \cong (\left(\begin{smallmatrix}\ast&\ast\\&\ast\end{smallmatrix}\right)\times\left(\begin{smallmatrix}\ast&\ast\\\ast&\ast\end{smallmatrix}\right))\cap(\GL(2,q)^2)^0.
\end{gather*}
By definition of the isomorphism $\Pamo/\Pamo^+\cong(\GL(2,q)^2)^0$, the space of these $\wt{f}$ is the induced representation $[\wt{\mu}\times 1,\wt{\sigma}]$.

For the second subspace the argument is analogous. 
\end{proof}
\begin{proposition}\label{Prop:paramod_res_Siegel_induced}
For smooth characters $\mu_0, \mu_1,\mu_2$ of $F^\times$ and an irreducible admissible representation $(\sigma,V_\sigma)$ of $\GL(2,F)$, the parahoric restriction of the Siegel induced representation $\sigma\rtimes\mu_0$ at the standard paramodular subgroup $\Pamo$ is
\begin{equation*}
\parres_{\Pamo}(\sigma\rtimes\mu_0) \quad\cong\quad
\begin{cases}  
\wt{\mu_0}[1\times\wt{\mu_1}, 1\times\wt{\mu_2}]
+\wt{\mu_0}[1\times\wt{\mu_2}, 1\times\wt{\mu_1}]& \sigma\cong\mu_1\times\mu_2,\\
\wt{\mu_0}[1\times\wt{\mu_1}, 1\times\wt{\mu_1}] & \sigma=\mu_1\St,\;\mu_1\Ione,\\
0                                                & \sigma \text{ cuspidal.}
\end{cases}\label{eq:r02_Siegel_St_ind}
\end{equation*}
\end{proposition}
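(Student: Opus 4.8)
The plan is to mirror the proof of Proposition~\ref{Prop:paramod_res_Klingen_induced}, now inducing from the Siegel parabolic $\Siegel$ and exploiting the \emph{single} double coset $G=\Siegel\Pamo$ from \eqref{eq:double_cosets_GSp4}. Realize $\parres_\Pamo(\sigma\rtimes\mu_0)$ on
\begin{equation*}
\wt V=\{f:G\to V_\sigma\,|\,f(pgk)=\delta_\Siegel^{1/2}(p)(\sigma\boxtimes\mu_0)(p)f(g)\ \ \forall p\in\Siegel,\,g\in G,\,k\in\Pamo^+\}
\end{equation*}
with the right action of $\Pamo$. Since $G=\Siegel\Pamo$, restriction $f\mapsto f|_\Pamo$ is a $\Pamo$-equivariant isomorphism of $\wt V$ onto the space of functions $\Pamo\to V_\sigma$ that are right $\Pamo^+$-invariant and left equivariant under $\Pamo\cap\Siegel$ via $\delta_\Siegel^{1/2}\cdot(\sigma\boxtimes\mu_0)$ (the left equivariance uses, as in the quoted proof, that $\Pamo^+$ is normal in $\Pamo$).

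Next I would pin down, inside the Levi quotient $\underline\Pamo\cong(\GL(2,q)^2)^0$, the image of $\Pamo\cap\Siegel$ and the representation through which the left equivariance factors. A direct matrix computation with the explicit entry patterns of $\Pamo,\Pamo^+$ shows that $\Pamo^+\cap\Siegel$ projects, in the $\GL(2,F)$-direction of the Siegel Levi $\GL(2,F)\times\GL(1,F)$, onto the pro-unipotent radical of the standard Iwahori of $\GL(2,F)$, and in the similitude direction onto $1+\fp$; moreover $\delta_\Siegel^{1/2}$ is trivial on $\Pamo\cap\Siegel$ because there $|\det A|=|\simi|=1$. Hence every $f|_\Pamo$ takes values in the Iwahori-invariant subspace of $V_\sigma$, i.e.\ in the space carrying $\parres_{\Iwahori}(\sigma)$ of Table~\ref{tab:F_1_for_GL2}, tensored with $\wt{\mu_0}\circ\simi$. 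Likewise $\Pamo\cap\Siegel$ projects onto $\Iwahori_{\GL(2,F)}\times\fo^\times$, and therefore has image the parabolic $\mathscr R=(\left(\begin{smallmatrix}\ast&\ast\\&\ast\end{smallmatrix}\right)\times\left(\begin{smallmatrix}\ast&\ast\\&\ast\end{smallmatrix}\right))\cap(\GL(2,q)^2)^0$ in $\underline\Pamo$. Consequently $\wt V\cong\Ind_{\mathscr R}^{(\GL(2,q)^2)^0}(\tau)$, where the inducing representation $\tau$ of the Levi of $\mathscr R$ is read off from $\parres_{\Iwahori}(\sigma)$ and $\wt{\mu_0}$ by tracking which residue-torus coordinate of $\Pamo\cap\Siegel$ lands in which $\GL(2,q)$-factor under the paramodular isomorphism $x\mapsto(\left(\begin{smallmatrix}x_{11}&x_{13}\varpi\\x_{31}\varpi^{-1}&x_{33}\end{smallmatrix}\right),\left(\begin{smallmatrix}x_{22}&x_{24}\\x_{42}&x_{44}\end{smallmatrix}\right))$. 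Here one also uses that the similitude equals the product of the two diagonal entries of the first $\GL(2,q)$-factor, which converts the $\wt{\mu_0}$-twist into the factor $\wt{\mu_0}\circ\det$ of \eqref{eq:paraho_res_twist_Pamo}. Unwinding the symbol $[\,\cdot\,,\,\cdot\,]$ as at the end of the proof of Proposition~\ref{Prop:paramod_res_Klingen_induced}, this says exactly: for each summand $\chi\boxtimes\chi'$ of $\parres_{\Iwahori}(\sigma)$ one gets the summand $\wt{\mu_0}[\,1\times\chi,\,1\times\chi'\,]$ of $\parres_\Pamo(\sigma\rtimes\mu_0)$.

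Finally I would substitute Table~\ref{tab:F_1_for_GL2}: for $\sigma\cong\mu_1\times\mu_2$ one has $\parres_{\Iwahori}(\sigma)=\wt{\mu_1}\boxtimes\wt{\mu_2}+\wt{\mu_2}\boxtimes\wt{\mu_1}$, giving the two-term answer; for $\sigma=\mu_1\St$ and $\sigma=\mu_1\Ione$ one has $\parres_{\Iwahori}(\sigma)=\wt{\mu_1}\boxtimes\wt{\mu_1}$, giving the single term; and for $\sigma$ cuspidal $\parres_{\Iwahori}(\sigma)=0$, so the value space of $f|_\Pamo$ is zero and $\wt V=0$.

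The main obstacle is the middle step: identifying $\Pamo\cap\Siegel\pmod{\Pamo^+}$ with $\mathscr R$ and, above all, getting the inducing datum $\tau$ exactly right. This means verifying the surjectivity of the two projections above (so that one really lands on the Iwahori restriction of $\sigma$, not a smaller invariant space), and keeping track of the two modulus characters in play — the $\GSp(4)$-Siegel character $\delta_\Siegel^{1/2}$ and the half-sum of roots built into the finite-group symbol $[\,\cdot\,,\,\cdot\,]$ — to confirm they are compatible, so that no spurious unramified twist survives and the two arrangements $[\,\chi\times1,\chi'\times1\,]$ and $[\,1\times\chi,1\times\chi'\,]$ indeed coincide (they are conjugate under the double transposition, which lies in $(\GL(2,q)^2)^0$). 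Once this bookkeeping is settled, the three cases are immediate.
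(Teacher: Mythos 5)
Your proposal follows essentially the same route as the paper's proof: realize $\parres_\Pamo(\sigma\rtimes\mu_0)$ on the induced model, use the single coset $G=\Siegel\Pamo$ to restrict functions to $\Pamo$, observe that $\Pamo^+\cap\Siegel$ forces values into $\parres_{\Iwahori_{\GL(2)}}(\sigma)$, and then read off the induced representation $[1\times\cdot\,,1\times\cdot]$ of $(\GL(2,q)^2)^0$ from Table~\ref{tab:F_1_for_GL2} (the paper simply normalizes $\mu_0=1$ first via \eqref{eq:paraho_res_twist_Pamo} rather than carrying the $\wt{\mu_0}$-twist along). The bookkeeping you flag is exactly the content of the paper's appeal to the explicit description of $\Pamo$, $\Pamo^+$ and the isomorphism $\Pamo/\Pamo^+\cong(\GL(2,q)^2)^0$, and it works out as you describe.
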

\begin{proof}
By \eqref{eq:paraho_res_twist_Pamo} we can assume without loss of generality that $\mu_0=1$.  An explicit model $\wt{V}$ of $\parres_{\Pamo}(\sigma\rtimes1)$ is given by right-multiplication with elements of $\Pamo$ on the vector space of smooth functions $f:G\to V_{\sigma}$ with
\begin{gather*}
f(pgk)=\delta_{\Siegel}^{1/2}(p)\cdot\sigma\begin{pmatrix}p_{11}&p_{12}\\p_{21}&p_{22}\end{pmatrix} f(g)
\end{gather*}
for $p\in \Siegel $, $g\in G$ and $k\in\Pamo^+$. By the decomposition $G=\Siegel\Pamo$ \eqref{eq:double_cosets_GSp4}, every such $f$ is uniquely determined by its restriction to $\Pamo$. Therefore $\wt{V}$ is isomorphic to the vector space of $\Pamo^+$-invariant functions $$\wt{f}:\Pamo\to V_{\sigma}$$ which satisfy the condition
\begin{equation*}
 \wt{f}(pg)=\sigma\begin{pmatrix}p_{11}&p_{12}\\p_{21}&p_{22}\end{pmatrix} \wt{f}(g) \qquad\forall g\in\Pamo,\quad\forall p\in \Siegel \cap \Pamo.
\end{equation*}
Since $\wt{f}$ is left-invariant under every $p\in\Siegel \cap\Pamo^+$, the value $\wt{f}(g)\in V_\sigma$ is invariant under $\left(\begin{smallmatrix}1+\fp&\fo\\\fp&1+\fp\end{smallmatrix}\right)\subseteq\GL(2,F)$.
That means $\wt{f}(g)$ must be contained in the parahoric restriction $\parres_{\Iwahori_{\GL(2)}}(\sigma)$ with respect to the standard Iwahori $\Iwahori_{\GL(2)}\subseteq\GL(2,F)$.
By Lemma \ref{Lemma:hy_res_GL(2)}, $\parres_{\Iwahori_{\GL(2)}}(\sigma)$ is zero for cuspidal $\sigma$. For $\sigma=\mu\Ione,\mu\St$ it is isomorphic to $\wt{\mu_1}\boxtimes\wt{\mu_1}$ and the condition on $\wt{f}$ is
\begin{gather*}
 \wt{f}(pg)= \wt{\mu_1}(p_{11})\,\wt{\mu_1}(p_{22}) \wt{f}(g),\qquad \forall g\in \Pamo/\Pamo^+,\quad \forall p\in (\Siegel \cap \Pamo)/(\Siegel \cap \Pamo^+).
\end{gather*}
By construction of the isomorphism $\Pamo/\Pamo^+\cong(\GL(2,q)^2)^0$, the action of $\Pamo$ on $\wt{V}$ is the induced representation $[1\times\wt{\mu_1},1\times\wt{\mu_1}]$.
For the principal series $\sigma=\mu_1\times\mu_2$, the argument is analogous.
\end{proof}

\begin{proposition}\label{Prop:F_1_commutes_par_ind}
For admissible representations of $\GSp(4,F)$ that are parabolically induced, the parahoric restriction at $\PahoHy$ is given by
\begin{gather*}\label{eq:F_1_commutes_par_ind}
\parres_{\PahoHy}(\mu_1\times\mu_2\rtimes\mu_0)\cong \wt{\mu_1}\times\wt{\mu_2}\rtimes\wt{\mu_0},\\
\parres_{\PahoHy}(\mu_1\rtimes\sigma)\cong \wt{\mu_1}\rtimes\parres_{\GSp(2,\fo)}(\sigma),\\
\parres_{\PahoHy}(\sigma\rtimes\mu_0)\cong \parres_{\GL(2,\fo)}(\sigma)\rtimes\wt{\mu_0},
\end{gather*} for smooth characters $\mu_i:F^\times\to\CC^\times$ and an admissible representation $\sigma$ of $\GL(2,F)$.
\end{proposition}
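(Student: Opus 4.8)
The plan is to obtain all three formulas from a single principle, to be proved exactly in the manner of Propositions~\ref{Prop:paramod_res_Klingen_induced} and~\ref{Prop:paramod_res_Siegel_induced}: for a standard parabolic $\Pi$ of $G$ with Levi $M$ and unipotent radical $N$, and an admissible representation $(\sigma,V_\sigma)$ of $M$, parahoric restriction at $\PahoHy$ turns normalized parabolic induction into finite parabolic induction,
\[\parres_{\PahoHy}\bigl(\Ind_\Pi^G(\sigma)\bigr)\;\cong\;\Ind_{\overline{\Pi}}^{\GSp(4,q)}\bigl(\parres_{M\cap\PahoHy}(\sigma)\bigr),\]
where $\overline{\Pi}=(\Pi\cap\PahoHy)\PahoHy^+/\PahoHy^+$ is the standard parabolic of $\underline{\PahoHy}\cong\GSp(4,q)$ with Levi $\underline{M\cap\PahoHy}$, and $\Ind_{\overline{\Pi}}^{\GSp(4,q)}$ means inflation to $\overline{\Pi}$ followed by induction. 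The three asserted formulas are then the cases $\Pi\in\{\Borel,\Klingen,\Siegel\}$. The ingredients I would use are: the Iwasawa decomposition $G=\Pi\PahoHy$, immediate from $G=\Borel\PahoHy$ in~\eqref{eq:double_cosets_GSp4} because $\Borel\subseteq\Pi$; the Iwahori decomposition of $\PahoHy^+$ with respect to $\Pi$ \cite[4.2]{Moy-Prasad1996}, which gives $\Pi\cap\PahoHy^+=(M\cap\PahoHy^+)(N\cap\PahoHy^+)$ and identifies $M\cap\PahoHy^+$ as the pro-unipotent radical of the parahoric $M\cap\PahoHy$ of $M$; and the vanishing of the modulus character $\delta_\Pi$ on the compact group $\Pi\cap\PahoHy$.

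First I would write out the standard model: the space $V^{\PahoHy^+}$ carrying $\parres_{\PahoHy}(\Ind_\Pi^G(\sigma))$ consists of the smooth functions $f\colon G\to V_\sigma$ with $f(mngk)=\delta_\Pi^{1/2}(m)\sigma(m)f(g)$ for $m\in M$, $n\in N$, $g\in G$, $k\in\PahoHy^+$, and $\PahoHy$ acts by right translation. By $G=\Pi\PahoHy$ such an $f$ is determined by $f|_{\PahoHy}$, which, since $\delta_\Pi^{1/2}$ is trivial on $\Pi\cap\PahoHy$, satisfies $f(pk)=\sigma(p)f(k)$ for $p\in\Pi\cap\PahoHy$. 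For $m\in M\cap\PahoHy^+\subseteq\PahoHy^+$, right $\PahoHy^+$-invariance gives $f(mk)=f\bigl(k\,(k^{-1}mk)\bigr)=f(k)$ while left equivariance gives $f(mk)=\sigma(m)f(k)$, so $f(k)\in V_\sigma^{M\cap\PahoHy^+}$, the space of $\parres_{M\cap\PahoHy}(\sigma)$. The factorization $\Pi\cap\PahoHy^+=(M\cap\PahoHy^+)(N\cap\PahoHy^+)$ acts trivially on this subspace (the $M$-factor by definition of the invariants, the $N$-factor because $\sigma$ is inflated from $M$), so $f|_{\PahoHy}$ descends to a function $\wt{f}\colon\underline{\PahoHy}\to V_\sigma^{M\cap\PahoHy^+}$ transforming under $\overline{\Pi}$ via $\parres_{M\cap\PahoHy}(\sigma)$ on its Levi. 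Conversely, each such $\wt{f}$ extends uniquely over $G=\Pi\PahoHy$ to an element of $V^{\PahoHy^+}$, and the resulting bijection $f\mapsto\wt{f}$ intertwines the right-translation actions of $\underline{\PahoHy}$; this is the displayed isomorphism.

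Finally I would specialize. For $\Pi=\Borel$ we have $M\cap\PahoHy=\mathbf{T}(\fo)\cong(\fo^\times)^3$ in the coordinates $(t_1,t_2,\simi)$, and $\parres_{\mathbf{T}(\fo)}(\mu_1\boxtimes\mu_2\boxtimes\mu_0)=\wt{\mu_1}\boxtimes\wt{\mu_2}\boxtimes\wt{\mu_0}$ with $\wt{\mu_i}=\parres_{\fo^\times}(\mu_i)$, using~\eqref{eq:hy_res_GL(1)} and the multiplicativity of parahoric restriction over direct products of groups; finite induction from the standard Borel of $\GSp(4,q)$ then gives $\wt{\mu_1}\times\wt{\mu_2}\rtimes\wt{\mu_0}$. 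For $\Pi=\Klingen$ we have $M\cap\PahoHy\cong\fo^\times\times\GSp(2,\fo)$, hence $\parres_{M\cap\PahoHy}(\mu_1\boxtimes\sigma)=\wt{\mu_1}\boxtimes\parres_{\GSp(2,\fo)}(\sigma)$ and the right-hand side is $\wt{\mu_1}\rtimes\parres_{\GSp(2,\fo)}(\sigma)$. For $\Pi=\Siegel$ we have $M\cap\PahoHy\cong\GL(2,\fo)\times\fo^\times$, hence $\parres_{M\cap\PahoHy}(\sigma\boxtimes\mu_0)=\parres_{\GL(2,\fo)}(\sigma)\boxtimes\wt{\mu_0}$ and the right-hand side is $\parres_{\GL(2,\fo)}(\sigma)\rtimes\wt{\mu_0}$. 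I expect the only genuine work to be this last bookkeeping: checking that the reduction $\PahoHy\to\GSp(4,q)$ carries $M\cap\PahoHy$ onto the relevant standard finite Levi and $\overline{\Pi}$ onto the corresponding standard finite parabolic, compatibly with the coordinatizations of the finite Levi quotient fixed in Section~2; everything else is the standard Iwasawa argument already invoked in the proof of Lemma~\ref{Lemma:hy_res_GL(2)}.
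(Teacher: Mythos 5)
Your argument is essentially the paper's own proof: the paper disposes of this proposition by noting it follows "as in the previous propositions" from the Iwasawa decomposition $G=\Borel\PahoHy$, i.e.\ exactly the restriction-of-functions argument you carry out (determine $f$ by $f|_{\PahoHy}$, observe the values land in $V_\sigma^{M\cap\PahoHy^+}$, and identify the result with finite parabolic induction from $\overline{\Pi}$). Your uniform treatment of the three parabolics, with the explicit remarks on the triviality of $\delta_\Pi^{1/2}$ on $\Pi\cap\PahoHy$ and the Iwahori decomposition of $\PahoHy^+$, is correct and matches the intended proof.
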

\begin{proof}
The proof is similar to the previous propositions using $G=B\PahoHy$.
\end{proof}
\begin{proof}[Thm.\,\ref{thm:02:F_1_GSp4} i)] We only discuss the case of odd $q$; for even $q$ the proof is analogous. 
Irreducible representations $\rho$ of type I, II, III, VII and X are parabolically induced, so the result is clear by Prop.\,\ref{Prop:F_1_commutes_par_ind}, by \eqref{eq:hy_res_GL(1)} and Table~\ref{tab:F_1_for_GL2}.
Otherwise, $\rho$ is a non-trivial subquotient of a parabolically induced representation $\kappa$ and $\parres_{\PahoHy}(\kappa)$ is given by Prop.~\ref{Prop:F_1_commutes_par_ind}. If $\parres_\PahoHy(\kappa)=0$, then $\parres_\PahoHy(\rho)=0$ by exactness, otherwise $\parres_\PahoHy(\rho)$ is a non-zero subquotient of $\parres_\PahoHy(\kappa)$ by Prop.~\ref{Prop:F_1_nonzero_for_subquots_of_par_ind}.
It remains to determine the correct constituents of $\parres_\PahoHy(\kappa)$ case by case.
By \eqref{eq:paraho_res_twist} we can assume without loss of generality that $\mu_0=1$.

For the trivial representation $\rho=\Ione_{\GSp(4,F)}$ (type IVd), the hyperspecial parahoric restriction is trivial $\parres_\PahoHy(\rho)=\theta_0$. By \cite[(2.9)]{Roberts-Schmidt} and character theory \cite{Shinoda}
\begin{gather*}
\parres_\PahoHy(L(\nu^{2},\nu^{-1}\St_{\GSp(2,F)}))+\parres_\PahoHy(\Ione_{\GSp(4,F)})\cong\Ione_{\GL(2,q)}\rtimes1=\chi_3(1,1)\end{gather*}
decomposes as $\chi_3(1,1)=\theta_0+\theta_1+\theta_3$, so for type IVb 
\begin{gather*}\parres_\PahoHy(L(\nu^{2},\nu^{-1}\St_{\GSp(2,F)}))=\theta_1+\theta_3.\end{gather*} By the same argument we determine the parahoric restriction for type IVa and IVc as constituents of $\chi_1(1,1)=1\rtimes\Ione_{\GSp(2,q)}$ and $\chi_2(1,1)=1\rtimes\St_{\GSp(2,q)}$.

The representation $\rho=L(\nu^{1/2}\xi\St,\nu^{-1/2})$ of type Vb is a constituent of both \begin{equation*}\nu^{1/2}\xi\St_{\GL(2,F)}\rtimes\nu^{-1/2} \quad\text{and}\quad \nu^{1/2}\xi\Ione_{\GL(2,F)}\rtimes\nu^{-1/2}\xi\end{equation*} \cite[(2.10)]{Roberts-Schmidt}. Therefore the parahoric restriction $\parres_\PahoHy(\rho)$ must be a (non-zero) constituent of both $\wt{\xi}\St_{\GL(2,q)}\rtimes1$ and $\wt{\xi}\St_{\GL(2,q)}\rtimes\wt{\xi}$. By \cite{Shinoda}, the only common constituent is $\theta_1$ for unramified $\xi$ and $\tau_2$ for tamely ramified $\xi$. By exactness, types Va, Vc and Vd are clear.

The Klingen induced representation $1\rtimes\St_{\GL(2,F)}$ splits into the direct sum of $\tau(S,\nu^{-1/2})$ of type VIa and $\tau(T,\nu^{-1/2})$ of type VIb \cite[(2.11)]{Roberts-Schmidt}.
Its parahoric restriction is $\parres_{\PahoHy}(1\rtimes\St_{\GL(2,F)})=\theta_1+\theta_3+\theta_5$. The representation $\tau(S,\nu^{-1/2})$ is contained in $\nu^{1/2}\St\rtimes\nu^{-1/2}$ with restriction at $\PahoHy$ given by $\St_{\GL(2,q)}\rtimes1=\theta_1+\theta_4+\theta_5$; while $\tau(T,\nu^{-1/2})$ is contained in $\nu^{1/2}\Ione\rtimes\nu^{-1/2}$ with restriction $\theta_0+\theta_1+\theta_3$.
Therefore the pair of parahoric restrictions $(\parres_{\PahoHy}(\tau(S,\nu^{-1/2})),\parres_\PahoHy(\tau(T,\nu^{-1/2})))$ is either $(\theta_5+\theta_1,\theta_3)$ or $(\theta_5,\theta_1+\theta_3)$.
But the virtual representation
\begin{equation*}\tau(S,\nu^{-1/2})-\tau(T,\nu^{-1/2})\end{equation*}
is the endoscopic lift of $(\St_{\GL(2,F)},\St_{\GL(2,F)})$ in the sense of \cite{Weissauer200903}, so the trace of its parahoric restriction at $\PahoHy$ is zero on the $\GSp(4,q)$-conjugacy class stably conjugate to $\diag(a^q,a,a^{q^3},a^{q^2})$ \cite[Cor.\,4.25]{My_PhD_Thesis} for $a\in\FF^\times_{q^4}$ with $a^{q^2+1}\in\FF_q^\times$ and $a^{q-1}\neq\pm1$.
This implies $\parres_\PahoHy(\tau(S,\nu^{-1/2}))=\theta_5+\theta_1$ and $\parres_{\PahoHy}(\tau(T,\nu^{-1/2}))=\theta_3$. Types VIc and VId are clear by exactness.

For an irreducible cuspidal admissible representation $\cuspGL$ of $\GL(2,F)$ of depth zero, the Klingen induced representation $1\rtimes\cuspGL$ is a direct sum of two irreducible constituents, the generic $\tau(S,\cuspGL)$ of type VIIIa and the non-generic $\tau(T,\cuspGL)$ of type VIIIb.
The parahoric restriction $\parres_{\PahoHy}(1\rtimes\cuspGL)\cong1\rtimes\wt{\cuspGL}=X_3(\Lambda,1)=\chi_7(\Lambda)+\chi_8(\Lambda)$ has two irreducible constituents, so Prop.~\ref{Prop:F_1_nonzero_for_subquots_of_par_ind} implies that $\parres_\PahoHy(\tau(S,\cuspGL))$ is isomorphic to one of them and $\parres_\PahoHy(\tau(T,\cuspGL))$ is isomorphic to the other.
By a suitable character twist \eqref{eq:paraho_res_twist} we can assume that $\cuspGL$ is unitary. Then the virtual representation $\tau(S,\pi)-\tau(T,\pi)$ is the local endoscopic character lift of the representation $(\pi,\pi)$ of $\GL(2,F)^2/\GL(1,F)$ (antidiagonally embedded), compare \cite[Thm.\,4.5]{Weissauer200903}. For $\alpha,\beta\in\FF_{q^2}^\times$ with $\alpha,\beta,\alpha\beta,\alpha\beta^q\notin\FF_q^\times$, the trace of $\parres_\PahoHy(\tau(S,\pi)-\tau(T,\pi))$ on the stable conjugacy class with eigenvalues $\alpha\beta,\alpha\beta^q,\alpha^q\beta,\alpha^q\beta^q$ is
\begin{equation*}
 2(\Lambda(\alpha)+\Lambda(\alpha^q))(\Lambda(\beta)+\Lambda(\beta^q))\,,
\end{equation*}
see \cite[Cor.\,4.20]{My_PhD_Thesis}.
By Shinoda's character table \cite{Shinoda}, this coincides with the character value of $\chi_8(\Lambda)-\chi_7(\Lambda)$, but not with $\chi_7(\Lambda)-\chi_8(\Lambda)$. This implies $\parres_\PahoHy(\tau(S,\cuspGL))\cong\chi_8(\Lambda)$ and $\parres_\PahoHy(\tau(T,\cuspGL))\cong\chi_7(\Lambda)$.

For type IX, see \cite[\S3.2.1]{My_PhD_Thesis}.

Let $\rho=\delta(\nu^{1/2}\cuspGL,\nu^{-1/2})$ be an irreducible representation of type XIa where $\cuspGL$ is a cuspidal irreducible admissible representation of $\GL(2,F)$ with trivial central character. Then $\parres_{\PahoHy}(\rho)$ must be one of the two irreducible subquotients of
\begin{gather*}
 \parres_\PahoHy(\nu^{1/2}\cuspGL\rtimes\nu^{-1/2})=\wt{\pi}\rtimes1=X_2(\Lambda,1)=\chi_5(\omega_\Lambda,1)+\chi_6(\omega_\Lambda,1).
\end{gather*}
By \cite[Table A.12]{Roberts-Schmidt}, $\rho$ has paramodular level $\geq3$ and therefore does not admit non-zero invariants under the second paramodular congruence subgroup, which is conjugate to
\begin{gather*}
   \begin{pmatrix}\fo&\fp&\fo&\fp\\\fp&\fo&\fp&\fo\\\fo&\fp&\fo&\fp\\\fp&\fo&\fp&\fo\end{pmatrix}\cap \PahoHy.
\end{gather*} By character theory, $\chi_5(\omega_\Lambda,1)$ admits non-zero invariants under the image of this group in $\PahoHy/\PahoHy^+$,
so $\parres_\PahoHy(\rho)$ cannot be $\chi_5(\omega_\Lambda,1)$. The rest follows from exactness and Prop.~\ref{Prop:F_1_nonzero_for_subquots_of_par_ind}.
\end{proof}


\begin{proof}[Thm.\,\ref{thm:02:F_1_GSp4} ii)]
By transitivity of parahoric restriction \eqref{eq:trans_parahori_res}, this is implied by Thm.~\ref{thm:02:F_1_GSp4}. Parabolic restriction for $\GSp(4,q)$ can be determined explicitly by character theory. 
\end{proof}

For the paramodular subgroup $\Pamo\subseteq\GSp(4,F)$, the Atkin Lehner involution provides a symmetry condition:
\begin{lemma}\label{Lem:paramod_res_symmetry}
Let $\rho$ be an irreducible admissible representation of $\GSp(4,F)$. The parahoric restriction $\parres_{\Pamo}(\rho)$ is isomorphic to  $(a,b)\mapsto\parres_{\Pamo}(\rho)(b,a)$.
\end{lemma}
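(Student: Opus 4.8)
The plan is to exploit the fact that the Atkin--Lehner element $u_1$ normalizes the paramodular subgroup $\Pamo$, so conjugation by $u_1$ induces an automorphism of the finite reductive quotient $\Pamo/\Pamo^+\cong(\GL(2,q)^2)^0$, and to check that this automorphism is the swap $(a,b)\mapsto(b,a)$ up to inner automorphisms. First I would recall from the structure theory above that $u_1^{-1}\Pamo u_1=\Pamo$ and $u_1^{-1}\Pamo^+ u_1=\Pamo^+$, so that $x\mapsto u_1^{-1}xu_1$ descends to a well-defined group automorphism $\theta$ of $\Pamo/\Pamo^+$. Using the explicit coordinates $x\mapsto\bigl(\left(\begin{smallmatrix}x_{11}&x_{13}\varpi\\x_{31}\varpi^{-1}&x_{33}\end{smallmatrix}\right),\left(\begin{smallmatrix}x_{22}&x_{24}\\x_{42}&x_{44}\end{smallmatrix}\right)\bigr)$ together with the matrix form of $u_1=\left(\begin{smallmatrix}&&&1\\&&1&\\&\varpi&&\\\varpi&&&\end{smallmatrix}\right)$, a direct computation with block conjugation shows that $u_1$ interchanges the roles of the indices $\{1,3\}$ and $\{2,4\}$, so that $\theta$ sends $(a,b)$ to $(b',a')$ where $a',b'$ differ from $b,a$ only by an inner automorphism (a fixed conjugation coming from the entries $\varpi$ in $u_1$); hence $\theta\cong((a,b)\mapsto(b,a))$ as automorphisms of $(\GL(2,q)^2)^0$ modulo inner automorphisms.

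The representation-theoretic step is then routine: for any admissible $(\rho,V)$ of $G$, precomposing the action of $\Pamo$ on $V^{\Pamo^+}$ with the inner automorphism $\mathrm{int}(u_1)$ of $G$ gives $\parres_{\Pamo}(\rho)\circ\theta$, while on the other hand $\rho\circ\mathrm{int}(u_1)\cong\rho$ because $u_1\in G$, so $\parres_{\Pamo}(\rho)\circ\theta\cong\parres_{\Pamo}(\rho)$ as $\Pamo/\Pamo^+$-representations. Since twisting by an inner automorphism of $(\GL(2,q)^2)^0$ does not change the isomorphism class of a representation, and since $\theta$ equals the swap up to such an inner automorphism, this yields $\parres_{\Pamo}(\rho)(a,b)\cong\parres_{\Pamo}(\rho)(b,a)$, which is the claim.

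I expect the main obstacle to be bookkeeping rather than anything conceptual: one must be careful that $u_1\notin\Pamo$ itself, so $\mathrm{int}(u_1)$ is not inner on $\Pamo$ and genuinely moves the representation, and one must verify with some care that the induced automorphism of the quotient is precisely the coordinate swap (on the $\GL(1,q)$-of-determinants it is the identity, consistent with $\det a=\det b$, which is reassuring). An alternative, more computational route that avoids identifying $\theta$ on the nose would be to compare character values: by Theorem~\ref{thm:02:F_1_GSp4} iii) the character of $\parres_{\Pamo}(\rho)$ is tabulated, and one checks directly from the tables that every entry is symmetric under $(a,b)\leftrightarrow(b,a)$ --- but the conceptual argument via $u_1$ is cleaner and, unlike the table check, does not rely on $\rho$ being non-cuspidal.
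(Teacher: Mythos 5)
Your argument is correct and is essentially the paper's own proof: conjugation by the Atkin--Lehner element $u_1$, which normalizes $\Pamo$ and $\Pamo^+$, induces the swap automorphism of $(\GL(2,q)^2)^0$ (up to a harmless inner twist, in your computation by the element $(w,w)$ with $w$ the $2\times2$ permutation matrix), and $\rho\circ\mathrm{int}(u_1)\cong\rho$ since $u_1\in G$. The only nitpick is that the inner twist comes from the antidiagonal shape of $u_1$ rather than from its $\varpi$-entries, which cancel against the $\varpi^{\pm1}$ in the identification $\Pamo/\Pamo^+\cong(\GL(2,q)^2)^0$; this does not affect the conclusion.
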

\begin{proof}
Conjugation by $u_1$ preserves $\Pamo$ and $\Pamo^+$ and gives rise to the automorphism $(a,b)\mapsto (b,a)$ of $(\GL(2,q)^2)^0\cong\Pamo/\Pamo^+$.
\end{proof}


\begin{proof}[Thm.\,\ref{thm:02:F_1_GSp4} iii)]By \eqref{eq:paraho_res_twist_Pamo}, we can assume without loss of generality that $\mu_0=1$.
The irreducible admissible representations $\rho$ of type I, IIIa, IIIb and VII are Klingen induced and the statement is implied by Prop.~\ref{Prop:paramod_res_Klingen_induced}. Representations of type IIa, IIb, X, XIa and XIb are Siegel induced and given by Prop.~\ref{Prop:paramod_res_Siegel_induced}.

For the trivial representation $\Ione_{\GSp(4,F)}$ of type IVd, the parahoric restriction at $\Pamo$ is clearly the trivial representation $[\Ione,\Ione]$.
The representation $\rho=L(\nu^{3/2}\St,\nu^{-3/2})$ of type IVc is the non-trivial constituent of the Klingen induced representation $\nu^2\times\nu^{-1}\Ione_{\GSp(2,F)}$ \cite[(2.9)]{Roberts-Schmidt}. By exactness and Prop.~\ref{Prop:paramod_res_Klingen_induced}, its parahoric restriction is $\parres_\Pamo(\rho)=[\St,\Ione]+[\Ione,\St]+[\Ione,\Ione]$. By the analogous argument with Prop.~\ref{Prop:paramod_res_Siegel_induced}, the parahoric restriction for representations of type IVa and IVb is clear. 

For $\rho=L(\nu^{1/2}\xi\St,\nu^{-1/2}\xi)$ of type Vc with an unramified quadratic character $\xi=\xi_u$, the parahoric restriction $\parres_{\Pamo}(\rho)$ is contained in $\parres_\Pamo(\nu^{1/2}\xi_u\Ione_{\GL(2)}\rtimes\nu^{-1/2})=[1\times1,1\times1]$. By \cite[Thm.~3.30]{My_PhD_Thesis}, $\parres_{\Pamo}(\rho)$ has a generic subquotient, which must be $[\St,\St]$. There is exactly one further constituent in $\parres_{\Pamo}(\rho)$, because the Klingen parahoric restriction $\parres_{\PahoKl}(\rho)$ contains two constituents by Table \ref{tab:02:non-special_par_res_GSp4}. By Lemma~\ref{Lem:paramod_res_symmetry}, this can only be $[\Ione,\Ione]$.
For tamely ramified quadratic character $\xi=\xi_t$, the parahoric restriction $\parres_{\Pamo}(\rho)$ is given by \cite[\S3.3.3]{My_PhD_Thesis}. For types Va,Vb and Vd the result is clear by Prop.~\ref{Prop:paramod_res_Siegel_induced} and exactness.

The representation $\rho=\tau(S,\nu^{-1/2})$ of type VIa is a constituent of the Klingen induced representation $\nu^{1/2}\St_{\GL(2,F)}\rtimes\nu^{-1/2}$ and of the Siegel induced representation $1\rtimes\St_{\GSp(2,F)}$ \cite[(2.11)]{Roberts-Schmidt}. By Prop.~\ref{Prop:paramod_res_Klingen_induced} and \ref{Prop:paramod_res_Siegel_induced}, the parahoric restriction at $\Pamo$ is a subquotient of $[\St,\St]+[\Ione,\St]+[\St,\Ione]$. The Klingen parahoric restriction $\parres_{\PahoKl}(\rho)$ has three irreducible constituents, so \eqref{eq:trans_parahori_res} implies $\parres_{\Pamo}(\rho)\cong [\St,\St]+[\Ione,\St]+[\St,\Ione]$. For types VIb, VIc and VId the result is clear by exactness.


Representations of type VIII are irreducible subquotients of $1\rtimes\cuspGL$ with $\cuspGL$ of depth zero. Their paramodular restriction is either $[\wt{\pi},\Ione]+[\Ione,\wt{\pi}]$ or $[\wt{\pi},\St]+[\St,\wt{\pi}]$ by Prop.~\ref{Prop:paramod_res_Klingen_induced} and Lemma \ref{Lem:paramod_res_symmetry}. The rest of the argument is analogous to the hyperspecial case:
By \cite[Cor.~4.23]{My_PhD_Thesis}, the character value of $$\parres_{\Pamo}(\tau(S,\cuspGL)-\tau(T,\cuspGL))$$ on the conjugacy class stably conjugate to $(\diag(\alpha\beta,\alpha^q\beta^q),\diag(\alpha\beta^q,\alpha^q\beta))$ for $\alpha,\beta\in\FF_{q^2}^\times$ with $\alpha,\beta,\alpha\beta,\alpha\beta^q\notin\FF_q^\times$ is given by
\begin{gather*}
-2(\Lambda(\alpha)+\Lambda(\alpha^q))(\Lambda(\beta)+\Lambda(\beta^q))\\=2(-\Lambda(\alpha\beta)-\Lambda^q(\alpha\beta))+2(-\Lambda(\alpha\beta^q)-\Lambda^q(\alpha\beta^q)).
\end{gather*}
This implies  $\parres_{\Pamo}(\tau(T,\cuspGL))=[\wt{\pi},\St]+[\St,\wt{\pi}]$ and $\parres_{\Pamo}(\tau(S,\cuspGL))=[\wt{\pi},\Ione]+[\Ione,\wt{\pi}]$.

For type IX, see \cite[\S3.3.2]{My_PhD_Thesis}.
\end{proof}

\section{Tables}\label{sec:Tables}
The irreducible admissible representations of $\GSp(4,F)$ have been classified by Sally and Tadi\'{c} \cite{Sally_Tadic}. We use the notation of Roberts and Schmidt \cite{Roberts-Schmidt}.

For $i=0,1,2$ let $\mu_i: F^{\times}\to\CC^{\times}$ be tamely ramified or unramified characters.
Let $\cuspGL$ be an arbitrary cuspidal irreducible admissible representation of $\GL(2,F)$ of depth zero. Its hyperspecial restriction $\parres_{\GL(2,\fo)}(\cuspGL)=\wt{\pi}=\wt{\pi}_\Lambda$ is an irreducible cuspidal representation of $\GL(2,q)$. Up to a sign it is the Deligne-Lusztig representation attached to a character $\Lambda$ of $\FF_{q^2}^\times$ in general position, i.e. $\Lambda\neq\Lambda^q$. 
The contragredient of $\wt{\pi}$ is denoted $\wt{\pi}^\vee$.

The non-trivial unramified quadratic character of $F^{\times}$ is denoted $\xi_u$. For odd $q$ let $\xi_t$ be one of the two tamely ramified quadratic characters which reduce to the non-trivial quadratic character $\quadcharFq$ of $\FF_q^\times$. For even $q$ there is no tamely ramified quadratic character.

\textit{Table~\ref{tab:02:tab_F1_on_GSp4}.}
For even $q$, irreducible characters of $\Sp(4,q)$ have been classified by Enomoto \cite{Enomoto1972}. By the isomorphism $\Sp(4,q)\times\GL(1,q)\cong \GSp(4,q)$, $(x,t)\mapsto t\cdot x$, the irreducible characters of $\GSp(4,q)$ can be classified in terms of their restriction to $\Sp(4,q)$ and their central character. Fix a generator $\hat{\theta}$ of the cyclic character group of $\FF_{q^2}^\times$ and denote its restrictions to $\FF_q^\times$ by $\hat{\gamma}$ and to $\FF_{q^2}^\times[q+1]$ by $\hat{\eta}$, respectively.
Let $k_i\in \ZZ/(q-1)\ZZ$ be such that $\hat{\gamma}^{k_i}=\wt{\mu_i}$.
Let $l\in\ZZ/(q^2-1)\ZZ$ be such that $\Lambda=\hat{\theta}^{l}$ and let $l'$ be the image of $l$ under the canonical projection $\ZZ/(q^2-1)\ZZ\twoheadrightarrow\ZZ/(q+1)\ZZ$ so that the restriction of $\Lambda$ to $\FF_{q^2}^\times[q+1]$ is $\hat{\eta}^{l'}$. If $(q+1)l=0$, there is a unique preimage $l''$ of $l$ under the canonical injection $\ZZ/(q+1)\ZZ\hookrightarrow\ZZ/(q^2-1)\ZZ$.

For odd $q$, irreducible characters of $\GSp(4,q)$ have been classified by Shinoda \cite{Shinoda}. A character $\Lambda$ of $\FF_{q^2}^\times$ with $\Lambda^{q+1}=1$ factors over a character $\omega_\Lambda$ of $\FF_{q^2}^\times[q+1]$ via $\Lambda(\alpha)=\omega_\Lambda(\alpha^{q-1})$. If $\Lambda^{q-1}$ is the quadratic character $\Lambda_0$ of $\FF_{q^2}^\times$, there is a unique character $\lambda'$ of $\FF_{q^2}^\times[2(q-1)]$ with $\Lambda(\alpha)=\lambda'(\alpha^{(q+1)/2})$.

\textit{Table~\ref{tab:02:non-special_par_res_GSp4}.} The trivial and the Steinberg representation of $\GL(2,q)$ are denoted $\Ione$ and $\St$, respectively. The parabolic induction of the character $\mu_1\boxtimes\mu_2$ of the standard torus is denoted $\mu_1\times\mu_2$. For typographical reasons, we write
\begin{gather*}
 A(\wt{\mu_1},\wt{\mu_2},\wt{\mu_0}) =\wt{\mu_1}\boxtimes\wt{\mu_2}\boxtimes\wt{\mu_0}+\wt{\mu_1}\boxtimes\wt{\mu_2}^{-1}\boxtimes\wt{\mu_2}\wt{\mu_0}+\wt{\mu_1}^{-1}\boxtimes\wt{\mu_2}\boxtimes\wt{\mu_1}\wt{\mu_0}\\+\wt{\mu_1}^{-1}\boxtimes\wt{\mu_2}^{-1}\boxtimes\wt{\mu_1}\wt{\mu_2}\wt{\mu_0},\\
B(\wt{\mu_1},\wt{\mu_2},\wt{\mu_0})=\wt{\mu_1}\boxtimes(\wt{\mu_2}\rtimes\wt{\mu_0})+\wt{\mu_1}^{-1}\boxtimes(\wt{\mu_2}\rtimes\wt{\mu_1}\wt{\mu_0}),\\
C(\wt{\mu_1},\wt{\mu_2},\wt{\mu_0})=(\wt{\mu_1}\times\wt{\mu_2})\boxtimes\wt{\mu_0}+(\wt{\mu_1}^{-1}\times\wt{\mu_2})\boxtimes\wt{\mu_1}\wt{\mu_0}.
\end{gather*}

\textit{Table~\ref{tab:02:tab_r_x_paramod}.} We call a representation of $(\GL(2,q)^2)^0=\{(a,b)\in \GL(2,q)^2\mid \det a=\det b\}$ \emph{generic} if it is generic with respect to the unipotent character 
\begin{equation*}
\left(\begin{pmatrix}1&x\\&1\end{pmatrix},\,\begin{pmatrix}1&y\\&1\end{pmatrix}\right)\mapsto\wt{\psi}(x+y).
\end{equation*} This does not depend on the choice of the non-trivial additive character $\wt{\psi}$ of $\FF_q$.
The irreducible representations of $\GL(2,q)\times\GL(2,q)$ are $\sigma_1\boxtimes\sigma_2$ for irreducible representations $\sigma_i$ of $\GL(2,q)$. We denote the restriction of $\sigma_1\boxtimes\sigma_2$ to $(\GL(2,q)^2)^0$ by $[\sigma_1,\sigma_2]$. This restriction is irreducible unless $\quadcharFq\sigma_1\cong\sigma_1$ and $\quadcharFq\sigma_2\cong\sigma_2$ for the non-trivial quadratic character $\quadcharFq$ of $\FF_q^\times$, then it splits into an equidimensional direct sum of a generic constituent $[\sigma_1,\sigma_2]_+$ and a non-generic constituent $[\sigma_1,\sigma_2]_-$. 
These are all the irreducible representations of $(\GL(2,q)^2)^0$. The twist of a representation $\sigma$ of $(\GL(2,q)^2)^0$ by a character $\wt{\mu}$ of $\FF_q^\times$ is defined by
\begin{equation*}
 \wt{\mu}[\sigma_1,\sigma_2]
=[(\wt{\mu}\circ\det)\otimes\sigma_1,\sigma_2]
=[\sigma_1,(\wt{\mu}\circ\det)\otimes\sigma_2].
\end{equation*}

\begin{sidewaystable}
\caption{Parahoric restriction at $\PahoHy$ for non-cuspidal irreducible admissible representations of $\GSp(4,F)$.\label{tab:02:tab_F1_on_GSp4}}
\begin{small}
\begin{tabular}{lcrrrr}
\toprule
type &     $\rho$ of $\GSp(4,F)$                &$\parres_\PahoHy(\rho)|_{\Sp(4,q)}$ (even $q$)   & $\parres_\PahoHy(\rho)$ (odd $q$) & central character  & $\dim\parres_\PahoHy(\rho)$      \\\midrule
  I &$\mu_1\times\mu_2\rtimes\mu_0$          &$\chi_1(k_1,k_2) $            &$X_1(\wt{\mu_1},\wt{\mu_2},\wt{\mu_0})$		   &$\wt{\mu_1}\wt{\mu_2}\wt{\mu_0}^2$&$(q+1)^2(q^2+1)$\\    
 IIa &$\mu_1\St\rtimes\mu_0$                 &$\chi_{10}(k_1)$              &$\chi_4(\wt{\mu_1},\wt{\mu_0})$  &$\wt{\mu_1}^2\wt{\mu_0}^2$ &$q(q+1)(q^2+1)$ \\    
 IIb &$\mu_1\Ione\rtimes\mu_0$               &$\chi_6(k_1)$                 &$\chi_3(\wt{\mu_1},\wt{\mu_0})$  &$\wt{\mu_1}^2\wt{\mu_0}^2$ &$(q+1)(q^2+1)$           \\    
 IIIa&$\mu_1\rtimes\mu_0\St$                 &$\chi_{11}(k_1)$              &$\chi_2(\wt{\mu_1},\wt{\mu_0})$  &$\wt{\mu_1}\wt{\mu_0}^2$   &$q(q+1)(q^2+1)$ \\   
 IIIb&$\mu_1\rtimes\mu_0 \Ione$              &$\chi_7(k_1)$                 &$\chi_1(\wt{\mu_1},\wt{\mu_0})$  &$\wt{\mu_1}\wt{\mu_0}^2$   &$(q+1)(q^2+1)$           \\    
 IVa&$\mu_0 \St_{\GSp(4,F)}$                 &$\theta_4$                 &$\theta_5(\wt{\mu_0})$           &$\wt{\mu_0}^2$             &$q^4$           \\    
 IVb&$L(\nu^2,\nu^{-1}\mu_0 \St)$            &$\theta_1+\theta_2$&$\theta_1(\wt{\mu_0})+\theta_3(\wt{\mu_0})$   &$\wt{\mu_0}^2$ &$q^3+q^2+q$      \\ 
 IVc&$L(\nu^{3/2} \St,\nu^{-3/2}\mu_0)$      &$\theta_1+\theta_3$&$\theta_1(\wt{\mu_0})+\theta_4(\wt{\mu_0})$   &$\wt{\mu_0}^2$ &$q^3+q^2+q$          \\ 
 IVd&$\mu_0\Ione_{\GSp(4,F)}$                &$\theta_0$                 &$\theta_0(\wt{\mu_0})$           &$\wt{\mu_0}^2$             &$1$                      \\ 
 Va &$\delta(\left[\xi_u,\nu\xi_u\right],\nu^{-1/2}\mu_0)$&$\theta_3+\theta_4$  &$\theta_4(\wt{\mu_0})+\theta_5(\wt{\mu_0})$ &$\wt{\mu_0}^2$ & $q^4+\frac{1}{2}q(q^2+1)$\\ 
    &$\delta(\left[\xi_t,\nu\xi_t\right],\nu^{-1/2}\mu_0)$&$-$              &$\tau_3(\wt{\mu_0})$             &$\wt{\mu_0}^2$             &$q^2(q^2+1)$        \\ 
 Vb &$L(\nu^{1/2}\xi_u \St,\nu^{-1/2}\mu_0)$ &$\theta_1$                 &$\theta_1(\wt{\mu_0})$           &$\wt{\mu_0}^2$             &$\frac{1}{2}q(q+1)^2$  \\
    &$L(\nu^{1/2}\xi_t \St,\nu^{-1/2}\mu_0)$ &$-$                           &$\tau_2(\wt{\mu_0})$             &$\wt{\mu_0}^2$             &$q(q^2+1)$          \\
 Vc &$L(\nu^{1/2}\xi_u \St,\nu^{-1/2}\xi_u\mu_0)$&$\theta_1$             &$\theta_1(\wt{\mu_0})$           &$\wt{\mu_0}^2$             &$\frac{1}{2}q(q+1)^2$  \\
    &$L(\nu^{1/2}\xi_t \St,\nu^{-1/2}\xi_t\mu_0)$&$-$                       &$\tau_2(\wt{\mu_0}\quadcharFq)$     &$\wt{\mu_0}^2$             &$q(q^2+1)$       \\
 Vd &$L(\nu\xi_u,\xi_u\rtimes\nu^{-1/2}\mu_0)$&$\theta_0+\theta_2$&$\theta_0(\wt{\mu_0})+\theta_3(\wt{\mu_0})$    &$\wt{\mu_0}^2$    &$1+\frac{1}{2}q(q^2+1)$ \\ 
    &$L(\nu\xi_t,\xi_t\rtimes\nu^{-1/2}\mu_0)$&$-$                          &$\tau_1(\wt{\mu_0})$             &$\wt{\mu_0}^2$             &$q^2+1$         \\ 
 VIa&$\tau(S,\nu^{-1/2}\mu_0)$               &$\theta_1+\theta_4$&$\theta_1(\wt{\mu_0})+\theta_5(\wt{\mu_0})$             &$\wt{\mu_0}^2$    &$q^4+\frac{1}{2}q(q+1)^2$\\ 
 VIb&$\tau(T,\nu^{-1/2}\mu_0)$               &$\theta_2$                 &$\theta_3(\wt{\mu_0})$       &$\wt{\mu_0}^2$             &$\frac{1}{2}q(q^2+1)$       \\ 
 VIc&$L(\nu^{1/2} \St,\nu^{-1/2}\mu_0)$      &$\theta_3$                 &$\theta_4(\wt{\mu_0})$       &$\wt{\mu_0}^2$             &$\frac{1}{2}q(q^2+1)$       \\ 
 VId&$L(\nu,1_{F^{\times}}\rtimes\nu^{-1/2}\mu_0)$&$\theta_0+\theta_1$ &$\theta_0(\wt{\mu_0})+\theta_1(\wt{\mu_0})$&$\wt{\mu_0}^2$ &$1+\frac{1}{2}q(q+1)^2$      \\
\midrule 
 VII&$\mu_1\rtimes\cuspGL$                     &$\chi_3(k_1,l')$         	  & $X_3(\Lambda,\wt{\mu_1})$ &$\wt{\mu_1}\cdot\Lambda|_{\FF_q^\times}$&$q^4-1$\\
 VIIIa&$\tau(S,\cuspGL)$			   &$\chi_{13}(l')$        	  &$\chi_8(\Lambda)$          &$\Lambda|_{\FF_q^\times}$      &$q(q-1)(q^2+1)$ \\ 
 VIIIb&$\tau(T,\cuspGL)$			   &$\chi_{9}(l')$		  &$\chi_7(\Lambda)$          &$\Lambda|_{\FF_q^\times}$      &$(q-1)(q^2+1)$  \\
  IXa& $\delta(\nu\xi_u,\nu^{-1/2}\cuspGL)$	   &$\chi_{13}(l')$               &$\chi_8(\Lambda)$          &$\Lambda|_{\FF_q^\times}$      &$q(q-1)(q^2+1)$ \\ 
     & $\delta(\nu\xi_t,\nu^{-1/2}\cuspGL)$    &$-$                           &$\tau_5(\lambda')$  &$\quadcharFq\cdot\Lambda|_{\FF_q^\times}$&$q^2(q^2-1)$ \\ 
  IXb& $L(\nu\xi_u,\nu^{-1/2}\cuspGL)$         &$\chi_{9}(l')$                &$\chi_7(\Lambda)$          &$\Lambda|_{\FF_q^\times}$      &$(q-1)(q^2+1)$  \\
     & $L(\nu\xi_t,\nu^{-1/2}\cuspGL)$         &$-$                           &$\tau_4(\lambda')$  &$\quadcharFq\cdot\Lambda|_{\FF_q^\times}$&$q^2-1$      \\
\midrule 
 X &$\cuspGL\rtimes\mu$		           &$\chi_{2}(l)$                 &$X_2(\Lambda,\wt{\mu_0})$    &$\wt{\mu_0}^2\cdot\Lambda|_{\FF_q^\times}$&$q^4-1$\\ 
 XIa&$\delta(\nu^{1/2}\cuspGL,\nu^{-1/2}\mu_0)$  &$\chi_{12}(l'')$              &$\chi_6(\omega_\Lambda,\wt{\mu_0})$&$\wt{\mu_0}^2$             &$q(q-1)(q^2+1)$ \\ 
 XIb&$L(\nu^{1/2}\cuspGL,\nu^{-1/2}\mu_0)$	   &$\chi_{8}(l'')$               &$\chi_5(\omega_\Lambda,\wt{\mu_0})$&$\wt{\mu_0}^2$             &$(q-1)(q^2+1)$  \\
\bottomrule
\end{tabular}
\end{small}
\end{sidewaystable}

\begin{sidewaystable}
\caption{Parahoric restriction at $\Iwahori, \PahoKl, \PahoSi$ for non-cuspidal irreducible admissible representations of $\GSp(4,F)$. \label{tab:02:non-special_par_res_GSp4}}
\begin{small}
\begin{tabular}{lcrrr}
\toprule
type &$\rho$ of $\GSp(4,F)$    & $\parres_{\Iwahori}(\rho)\in \Rep((\FF_q^\times)^{3})$ & $\parres_{\PahoKl}(\rho)\in \Rep(\FF_q^\times\times\GSp(2,q))$ & $\parres_{\PahoSi}(\rho)\in \Rep(\GL(2,q)\times\FF_q^\times)$\\\midrule
  I  &$\mu_1\times\mu_2\rtimes\mu_0$& $A(\wt{\mu_1},\wt{\mu_2},\wt{\mu_0})+A(\wt{\mu_2},\wt{\mu_1},\wt{\mu_0})$ & $B(\wt{\mu_1},\wt{\mu_2},\wt{\mu_0})+B(\wt{\mu_2},\wt{\mu_1},\wt{\mu_0})$ & 
$C(\wt{\mu_1},\wt{\mu_2},\wt{\mu_0})+C(\wt{\mu_1},\wt{\mu_2}^{-1},\wt{\mu_2}\wt{\mu_0})$\\
 IIa &$\mu_1\St\rtimes\mu_0$       &$A(\wt{\mu_1},\wt{\mu_1},\wt{\mu_0})$  & $B(\wt{\mu_1},\wt{\mu_1},\wt{\mu_0})$  & $\wt{\mu_1}\St\boxtimes\wt{\mu_0}+\wt{\mu_1}^{-1}\St\boxtimes\wt{\mu_0}\wt{\mu_1}^2$\\    
     &                             &                             &         &$+(\wt{\mu_1}\times\wt{\mu_1}^{-1})\boxtimes\wt{\mu_0}\wt{\mu_1}$\\
 IIb &$\mu_1\Ione\rtimes\mu_0$     &$A(\wt{\mu_1},\wt{\mu_1},\wt{\mu_0})$  &$B(\wt{\mu_1},\wt{\mu_1},\wt{\mu_0})$    & $\wt{\mu_1}\Ione\boxtimes\wt{\mu_0}+\wt{\mu_1}^{-1}\Ione\boxtimes\wt{\mu_0}\wt{\mu_1}^2$ \\
     &                             &                             &         &$+(\wt{\mu_1}\times\wt{\mu_1}^{-1})\boxtimes\wt{\mu_0}\wt{\mu_1}$\\
 IIIa&$\mu_1\rtimes\mu_0\St$       & $\wt{\mu_1}\boxtimes 1\boxtimes\wt{\mu_0}+\wt{\mu_1}^{-1}\boxtimes 1\boxtimes\wt{\mu_1}\wt{\mu_0}$    &$\wt{\mu_1}\boxtimes\wt{\mu_0}\St+1\boxtimes\wt{\mu_1}\rtimes\wt{\mu_0}$  &     $C(\wt{\mu_1},1,\wt{\mu_0})$ \\ 
     &                             & $+1\boxtimes\wt{\mu_1}\boxtimes\wt{\mu_0}+1\boxtimes\wt{\mu_1}^{-1}\boxtimes\wt{\mu_1}\wt{\mu_0}$  & $+\wt{\mu_1}^{-1}\boxtimes\wt{\mu_1}\wt{\mu_0}\St$&\\
 IIIb&$\mu_1\rtimes\mu_0 \Ione$    & $\wt{\mu_1}\boxtimes 1\boxtimes\wt{\mu_0}+\wt{\mu_1}^{-1}\boxtimes 1\boxtimes\wt{\mu_1}\wt{\mu_0}$    &$\wt{\mu_1}\boxtimes\wt{\mu_0}\Ione+1\boxtimes\wt{\mu_1}\rtimes\wt{\mu_0}$  & $C(\wt{\mu_1},1,\wt{\mu_0})$ \\ 
     &                             &    $+1\boxtimes\wt{\mu_1}\boxtimes\wt{\mu_0}+1\boxtimes\wt{\mu_1}^{-1}\boxtimes\wt{\mu_1}\wt{\mu_0}$  & $+\wt{\mu_1}^{-1}\boxtimes\wt{\mu_1}\wt{\mu_0}\Ione$&\\
 IVa&$\mu_0 \St_{\GSp(4)}$         & $1\boxtimes 1\boxtimes\wt{\mu_0}$               &$1\boxtimes\wt{\mu_0}\St$                                   &$\St\boxtimes\wt{\mu_0}$\\
 IVb&$L(\nu^2,\nu^{-1}\mu_0 \St)$  & $3(1\boxtimes 1\boxtimes\wt{\mu_0})$            &$1\boxtimes\wt{\mu_0}\Ione+2(1\boxtimes\wt{\mu_0}\St)$      &$\St\boxtimes\wt{\mu_0}+2(\Ione\boxtimes\wt{\mu_0})$\\ 
 IVc&$L(\nu^{3/2} \St,\nu^{-3/2}\mu_0)$& $3(1\boxtimes 1\boxtimes\wt{\mu_0})$       &$2(1\boxtimes\wt{\mu_0}\Ione)+1\boxtimes\wt{\mu_0}\St$      &$2(\St\boxtimes\wt{\mu_0})+\Ione\boxtimes\wt{\mu_0}$\\ 
 IVd&$\mu_0\Ione_{\GSp(4)}$        & $1\boxtimes 1\boxtimes\wt{\mu_0}$               &$1\boxtimes\wt{\mu_0}\Ione$                                 &$\Ione\boxtimes\wt{\mu_0}$\\
 Va &$\delta(\left[\xi,\nu\xi\right],\nu^{-1/2}\mu_0)$&$\wt{\xi}\boxtimes \wt{\xi}\boxtimes\wt{\mu_0}+\wt{\xi}\boxtimes \wt{\xi}\boxtimes\wt{\xi}\wt{\mu_0}$ & $\wt{\xi}\boxtimes(\wt{\xi}\rtimes\wt{\mu_0})$ &$\wt{\xi}\St\boxtimes\wt{\mu_0}+\wt{\xi}\St\boxtimes\wt{\xi}\wt{\mu_0}$ \\
 Vb &$L(\nu^{1/2}\xi\St,\nu^{-1/2}\mu_0)$       & $\wt{\xi}\boxtimes \wt{\xi}\boxtimes\wt{\mu_0}+\wt{\xi}\boxtimes \wt{\xi}\boxtimes\wt{\xi}\wt{\mu_0}$ &$\wt{\xi}\boxtimes(\wt{\xi}\rtimes\wt{\mu_0})$ &$\wt{\xi}\St\boxtimes\wt{\mu_0}+\wt{\xi}\Ione\boxtimes\wt{\xi}\wt{\mu_0}$ \\
 Vc &$L(\nu^{1/2}\xi\St,\nu^{-1/2}\xi\mu_0)$   & $\wt{\xi}\boxtimes \wt{\xi}\boxtimes\wt{\mu_0}+\wt{\xi}\boxtimes \wt{\xi}\boxtimes\wt{\xi}\wt{\mu_0}$ &$\wt{\xi}\boxtimes(\wt{\xi}\rtimes\wt{\mu_0})$ &$\wt{\xi}\Ione\boxtimes\wt{\mu_0}+\wt{\xi}\St\boxtimes\wt{\xi}\wt{\mu_0}$ \\
 Vd &$L(\nu\xi,\xi\rtimes\nu^{-1/2}\mu_0)$     & $\wt{\xi}\boxtimes \wt{\xi}\boxtimes\wt{\mu_0}+\wt{\xi}\boxtimes \wt{\xi}\boxtimes\wt{\xi}\wt{\mu_0}$ &$\wt{\xi}\boxtimes(\wt{\xi}\rtimes\wt{\mu_0})$ &$\wt{\xi}\Ione\boxtimes\wt{\mu_0}+\wt{\xi}\Ione\boxtimes\wt{\xi}\wt{\mu_0}$ \\
 VIa&$\tau(S,\nu^{-1/2}\mu_0)$                  & $3(1\boxtimes 1\boxtimes\wt{\mu_0})$  &$1\boxtimes\wt{\mu_0}\Ione+2(1\boxtimes\wt{\mu_0}\St)$ & $2(\St\boxtimes\wt{\mu_0})+\Ione\boxtimes\wt{\mu_0}$\\ 
 VIb&$\tau(T,\nu^{-1/2}\mu_0)$                  & $1\boxtimes 1\boxtimes\wt{\mu_0}$  &$1\boxtimes\wt{\mu_0}\St$  &$\Ione\boxtimes\wt{\mu_0}$   \\ 
 VIc&$L(\nu^{1/2} \St,\nu^{-1/2}\mu_0)$         & $1\boxtimes 1\boxtimes\wt{\mu_0}$  &$1\boxtimes\wt{\mu_0}\Ione$&$\St\boxtimes\wt{\mu_0}$        \\ 
 VId&$L(\nu,1_{F^{\times}}\rtimes\nu^{-1/2}\mu_0)$& $3(1\boxtimes 1\boxtimes\wt{\mu_0})$  &$2(1\boxtimes\wt{\mu_0}\Ione)+1\boxtimes\wt{\mu_0}\St$ &$\St\boxtimes\wt{\mu_0}+2(\Ione\boxtimes\wt{\mu_0})$    \\\midrule
 VII&$\mu_1\rtimes\cuspGL$                      &$0$     & $\wt{\mu_1}\boxtimes\wt{\pi}+\wt{\mu_1}^{-1}\boxtimes\wt{\mu_1}\wt{\pi}$     & $0$  \\
VIIIa&$\tau(S,\cuspGL)$				&$0$     & $1\boxtimes\wt{\pi}$              & $0$          \\ 
VIIIb&$\tau(T,\cuspGL)$				&$0$	 & $1\boxtimes\wt{\pi}$              & $0$          \\
 IXa& $\delta(\nu\xi,\nu^{-1/2}\cuspGL)$		&$0$	 & $\wt{\xi}\boxtimes\wt{\pi}$              & $0$          \\ 
 IXb& $L(\nu\xi,\nu^{-1/2}\cuspGL)$ 	        &$0$	 & $\wt{\xi}\boxtimes\wt{\pi}$              & $0$          \\\midrule
 X &$\cuspGL\rtimes\mu_0$			&$0$	 & $0$     &
 $\wt{\pi}\boxtimes\wt{\mu_0}+(\wt{\pi})^\vee\boxtimes\wt{\omega_{\pi}}\wt{\mu_0}$\\ 
 XIa&$\delta(\nu^{1/2}\cuspGL,\nu^{-1/2}\mu_0)$	&$0$	 & $0$                               & $\wt{\pi}\boxtimes\wt{\mu_0}$\\ 
 XIb&$L(\nu^{1/2}\cuspGL,\nu^{-1/2}\mu_0)$	      	&$0$	 & $0$                               & $\wt{\pi}\boxtimes\wt{\mu_0}$\\\bottomrule
\end{tabular}
\end{small}
\end{sidewaystable}

\begin{table}
\caption{Parahoric restriction at $\Pamo$ for non-cuspidal irreducible admissible representations of $\GSp(4,F)$. The index is determined by $\xi_t(\varpi)=\pm1$. \label{tab:02:tab_r_x_paramod}}
\centering
 \begin{small}
\begin{tabular}{lcrr}
\toprule
type &$\rho$ of $\GSp(4,F)$                 & $\parres_{\Pamo}(\rho)\in\Rep((\GL(2,q)^2)^0)$                      & $\dim\parres_{\Pamo}(\rho)$ \\\midrule
   I &$\mu_1\times\mu_2\rtimes\mu_0$        & $\wt{\mu_0}[1\times\wt{\mu_1},1\times\wt{\mu_2}]+\wt{\mu_0}[1\times\wt{\mu_2},1\times\wt{\mu_1}]$&$2(q+1)^2$\\ 
  IIa&$\mu_1\St\rtimes\mu_0$                & $\wt{\mu_0}[1\times\wt{\mu_1},1\times\wt{\mu_1}]$                   &$(q+1)^2$   \\
  IIb&$\mu_1\Ione\rtimes\mu_0$              & $\wt{\mu_0}[1\times\wt{\mu_1},1\times\wt{\mu_1}]$                   &$(q+1)^2$   \\
 IIIa&$\mu_1\rtimes\mu_0\St$                & $\wt{\mu_0}[1\times\wt{\mu_1},\St]+\wt{\mu_0}[\St,1\times\wt{\mu_1}]$         &$2q(q+1)$   \\
 IIIb&$\mu_1\rtimes\mu_0 \Ione$             & $\wt{\mu_0}[1\times\wt{\mu_1},\Ione]+\wt{\mu_0}[\Ione,1\times\wt{\mu_1}]$     &$2(q+1)$    \\
  IVa&$\mu_0 \St_{\GSp(4,F)}$               & $\wt{\mu_0}[\St,\St]$                                              &$q^2$       \\
  IVb&$L(\nu^2,\nu^{-1}\mu_0 \St)$          & $\wt{\mu_0}[\St,\St]+\wt{\mu_0}[\Ione,\St]+\wt{\mu_0}[\St,\Ione]$               &$q^2+2q$    \\
  IVc&$L(\nu^{3/2} \St,\nu^{-3/2}\mu_0)$    & $\wt{\mu_0}[\Ione,\Ione]+\wt{\mu_0}[\Ione,\St]+\wt{\mu_0}[\St,\Ione]$           &$2q+1$      \\
  IVd&$\mu_0\Ione_{\GSp(4,F)}$              & $\wt{\mu_0}[\Ione,\Ione]$                                          &$1$         \\
   Va&$\delta(\left[\xi_u,\nu\xi_u\right],\nu^{-1/2}\mu_0)$&$\wt{\mu_0}[\Ione,\St]+\wt{\mu_0}[\St,\Ione]$        &$2q$        \\
     &$\delta(\left[\xi_t,\nu\xi_t\right],\nu^{-1/2}\mu_0)$&$\wt{\mu_0}[1\times\quadcharFq,1\times\quadcharFq]_\pm$           &$(q+1)^2/2$\\
%
%
   Vb&$L(\nu^{1/2}\xi_u \St,\nu^{-1/2}\mu_0)$ &$\wt{\mu_0}[\Ione,\Ione]+\wt{\mu_0}[\St,\St]$                     &$q^2+1$     \\
     &$L(\nu^{1/2}\xi_t\St,\nu^{-1/2}\mu_0)$  &$\wt{\mu_0}[1\times\quadcharFq,1\times\quadcharFq]_\mp$                       &$(q+1)^2/2$ \\
   Vc&$L(\nu^{1/2}\xi_u \St,\nu^{-1/2}\xi_u\mu_0)$& $\wt{\mu_0}[\Ione,\Ione]+\wt{\mu_0}[\St,\St]$                &$q^2+1$     \\
     &$L(\nu^{1/2}\xi_t\St,\nu^{-1/2}\xi_t\mu_0)$&$\wt{\mu_0}[1\times\quadcharFq,1\times\quadcharFq]_\mp$                     &$(q+1)^2/2$ \\
   Vd&$L(\nu\xi_u,\xi_u\rtimes\nu^{-1/2}\mu_0)$ &$\wt{\mu_0}[\Ione,\St]+\wt{\mu_0}[\St,\Ione]$                   &$2q$        \\
     &$L(\nu\xi_t,\xi_t\rtimes\nu^{-1/2}\mu_0)$&$\wt{\mu_0}[1\times\quadcharFq,1\times\quadcharFq]_\pm$                       &$(q+1)^2/2$ \\
  VIa&$\tau(S,\nu^{-1/2}\mu_0)$                &$\wt{\mu_0}[\St,\St]+\wt{\mu_0}[\St,\Ione]+\wt{\mu_0}[\Ione,\St]$               &$q^2+2q$    \\ 
  VIb&$\tau(T,\nu^{-1/2}\mu_0)$                &$\wt{\mu_0}[\St,\St]$                                                       &$q^2$       \\ 
  VIc&$L(\nu^{1/2} \St,\nu^{-1/2}\mu_0)$          &$\wt{\mu_0}[\Ione,\Ione]$                                                   &$1$         \\ 
  VId&$L(\nu,1_{F^{\times}}\rtimes\nu^{-1/2}\mu_0)$&$\wt{\mu_0}[\Ione,\Ione]+\wt{\mu_0}[\St,\Ione]+\wt{\mu_0}[\Ione,\St]$       &$2q+1$      \\\midrule
%
 VII &$\mu_1\rtimes\cuspGL$                        &$[1\times\wt{\mu_1},\wt{\pi}]+[\wt{\pi},1\times\wt{\mu_1}]$               &$2(q^2-1)$  \\
VIIIa&$\tau(S,\cuspGL)$			           &$[\Ione,\wt{\pi}]+[\wt{\pi},\Ione]$                                       &$2(q-1)$    \\ 
VIIIb&$\tau(T,\cuspGL)$			           &$[\St,\wt{\pi}]+[\wt{\pi},\St]$                                           &$2(q-1)q$   \\
  IXa&$\delta(\nu\xi_u,\nu^{-1/2}\cuspGL)$	   &$[\St,\wt{\pi}]+[\wt{\pi},\St]$                                           &$2(q-1)q$   \\
     &$\delta(\nu\xi_t,\nu^{-1/2}\cuspGL)$         &$[\wt{\pi},1\times\quadcharFq]_\mp+[1\times\quadcharFq,\wt{\pi}]_\mp$       &$q^2-1$    \\
  IXb&$L(\nu\xi_u,\nu^{-1/2}\cuspGL)$              &$[\Ione,\wt{\pi}]+[\wt{\pi},\Ione]$                                       &$2(q-1)$    \\

     &$L(\nu\xi_t,\nu^{-1/2}\cuspGL)$              &$[\wt{\pi},1\times\quadcharFq]_{\pm}+[1\times\quadcharFq,\wt{\pi}]_\pm$       &$q^2-1$    \\\midrule
%
   X &$\cuspGL\rtimes\mu_0$		           &$0$                                                     &$0$         \\ 
  XIa&$\delta(\nu^{1/2}\cuspGL,\nu^{-1/2}\mu_0)$   &$0$                                                     &$0$         \\ 
  XIb&$L(\nu^{1/2}\cuspGL,\nu^{-1/2}\mu_0)$	   &$0$                                                     &$0$ \\\bottomrule
\end{tabular}
\end{small}
\end{table}

\textit{Acknowledgements.}
The author wants to express his gratitude to R. Weissauer and U. Weselmann for valuable discussions.

\bibliographystyle{spmpsci}
\bibliography{Main_Parahori_res_GSp4}

\begin{thebibliography}{10}
\providecommand{\url}[1]{{#1}}
\providecommand{\urlprefix}{URL }
\expandafter\ifx\csname urlstyle\endcsname\relax
  \providecommand{\doi}[1]{DOI~\discretionary{}{}{}#1}\else
  \providecommand{\doi}{DOI~\discretionary{}{}{}\begingroup
  \urlstyle{rm}\Url}\fi

\bibitem{Borel_Iwahori_Invariants}
Borel, A.: {Admissible Representations of a Semi-Simple Group over a Local
  Field with Vectors Fixed under an Iwahori Subgroup}.
\newblock Inv. Math. \textbf{35}, 233--259 (1976)

\bibitem{Breeding}
Breeding-Allison, J.: {Irreducible Characters of {$GSp(4,\mathbb{F}_q)$}}.
\newblock Ramanujan J. \textbf{36}(3), 305--354 (2015)

\bibitem{BushnellHenniart}
Bushnell, C.J., Henniart, G.: {The Local Langlands Conjecture for {$GL(2)$}}.
\newblock {Grund\-lehren der Mathematischen Wissenschaften}. Springer (2006)

\bibitem{Enomoto1972}
Enomoto, H.: {The characters of the finite symplectic group {$Sp(4,q)$},
  {$q=2^f$}}.
\newblock Osaka J. Math. \textbf{9}, 75--94 (1972)

\bibitem{Moy-Prasad1996}
Moy, A., Prasad, G.: {Jacquet functors and unrefined minimal {K}-types}.
\newblock Comment. Math. Helvetici \textbf{71}, 98--121 (1996)

\bibitem{Roberts-Schmidt}
Roberts, B., Schmidt, R.: {Local Newforms for {$GSp(4)$}}, \emph{{Lecture Notes
  in Mathematics}}, vol. 1918, 1 edn.
\newblock Springer (2007)

\bibitem{My_PhD_Thesis}
R{\"o}sner, M.: {Parahoric restriction for GSp(4) and the inner cohomology of
  Siegel modular threefolds}.
\newblock Ph.D. thesis, University of Heidelberg (2016).
\newblock {doi:10.11588/heidok.00021401}

\bibitem{Sally_Tadic}
Sally, P., Tadi{\'c}, M.: {Induced Representations and classifications for
  {$GSp(2,F)$} and {$Sp(2,F)$}}.
\newblock M{\'e}moires Soci{\'e}t{\'e} Math{\'e}matique de France \textbf{52},
  75--133 (1994)

\bibitem{Shinoda}
Shinoda, K.I.: {The characters of the finite conformal symplectic group
  $CSp(4,q)$}.
\newblock Communications in Algebra \textbf{10}(13), 1369--1419 (1982)

\bibitem{Ta91}
Tadi{\'c}, M.: {On {Jacquet} Modules of Induced representations of p-adic
  symplectic groups}.
\newblock In: {Harmonic Analysis on Reductive Groups}, \emph{{Progress in
  Mathematics}}, vol. 101, pp. 305--314. Birkh{\"a}user (1991)

\bibitem{Vigneras_Representations_Modulaire}
Vign{\'e}ras, M.F.: {Repr{\'e}sentations {$l$}-modulaires d{\rq}un groupe
  r{\'e}ductif {$p$}-adique avec {$l\neq p$}}, \emph{{Progress in
  Mathematics}}, vol. 137.
\newblock Birkh{\"a}user (1996)

\bibitem{Vigneras_Barcelona}
Vign{\'e}ras, M.F.: {Irreducible Modular Representations of a reductive
  {$p$}-adic group and simple Modules for {Hecke} Algebras}.
\newblock In: C.~Casacuberta, et~al. (eds.) {European Congress of Mathematics,
  Barcelona}, \emph{{Progress in Mathematics}}, vol. 201, pp. 117--133.
  Birkh{\"a}user (2001)

\bibitem{Vig_Schur}
Vign{\'e}ras, M.F.: {Schur Algebras of reductive {$p$}-adic groups}.
\newblock Duke Math. J. \textbf{116}(1), 35--75 (2003)

\bibitem{Weissauer200903}
Weissauer, R.: {Endoscopy for {$GSp(4)$} and the Cohomology of Siegel Modular
  Threefolds}, \emph{{Lecture notes in Mathematics}}, vol. 1968.
\newblock Springer (2009)

\end{thebibliography}

\end{document}